\newcommand{\lra}{\longrightarrow}
\newcommand{\ie}{\leqslant}
\newcommand{\se}{\geqslant}
\newcommand{\bombscal}[3]{\left\langle #1\, \left| \, #2\right.\right\rangle_{(#3)} }
\newtheorem{theorem}{Theorem}
\newtheorem{proposition}{Proposition}
\newtheorem{cor}{Corollary}
\newtheorem{lemma}{Lemma}
\newtheorem{definition}{Definition}
\newtheorem{remark}{Remark}
\newtheorem*{claim}{Conjecture}
\begin{document}

\title[Sendov Conjecture]{Sendov conjecture for high degree polynomials}

\author{J\'er\^ome D\'egot}

\address{J\'er\^ome D\'egot \\ Lyc\'ee Louis-le-Grand \\
123 rue St Jacques\\
75 005 Paris} 

\email{jerome.degot@numericable.fr}

\subjclass[2010]{Primary 30C10, 30C15 ; Secondary 12D10}

\keywords{Sendov conjecture, polynomial, geometry of polynomial, zeroes, inequalities}

\date{\today}

\begin{abstract}
Sendov conjecture says that any complex polynomial $P$ having all its zeros
in the closed unit disk and $a$ being one of them, the closed disk of center $a$ and
radius $1$ contains a zero of the derivative $P'$. The main result of this paper is 
a proof of Sendov conjecture when the polynomial $P$ has a degree higher than a fixed 
integer $N$. We will
give estimates of this integer $N$ with respect to $|a|$. To obtain this result, we will
study the geometry of the zeros and critical points (i.e. zeros of $P'$) of a
polynomial which will eventualy contradict Sendov conjecture.   
\end{abstract}

\maketitle

\section{Introduction}
Sendov's conjecture can be state as follows.
\begin{claim}[]
	Let $P(z) = (z-a) \prod_{k=1}^{n-1} (z-z_k)$ be a monic complex polynomial 
	having all zeros in the closed unit disk there exists a zero $\zeta$ of 
	its derivative $P'$ such that :
	$|\zeta - a |\ie 1$.
\end{claim}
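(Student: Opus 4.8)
The plan is to argue by contradiction after the standard normalisations. By a rotation we may assume the distinguished zero $a=|a|\in[0,1]$ is real and nonnegative, with $P$ monic of degree $n$. The two extreme values are classical and should be dispatched first: when $a=0$, the Gauss--Lucas theorem places every critical point in the convex hull of the zeros, hence in the closed unit disk $\overline{D}=D(0,1)$, which is exactly $D(a,1)$; when $|a|=1$ the statement is Rubinstein's theorem. So I would fix $0<a<1$ and suppose, for contradiction, that every zero $\zeta_1,\dots,\zeta_{n-1}$ of $P'$ satisfies $|\zeta_j-a|>1$. Since Gauss--Lucas still gives $|\zeta_j|\le 1$, all critical points are then trapped in the lune $\overline{D}\setminus D(a,1)$.

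Next I would extract a quantitative consequence. Writing $P(z)=(z-a)\prod_{k=1}^{n-1}(z-z_k)$ and $P'(z)=n\prod_{j=1}^{n-1}(z-\zeta_j)$, and evaluating the second expression at $z=a$ (where $P(a)=0$), one obtains the identity
\[
 n\prod_{j=1}^{n-1}|a-\zeta_j| \;=\; |P'(a)| \;=\; \prod_{k=1}^{n-1}|a-z_k|.
\]
The contradiction hypothesis forces each factor $|a-\zeta_j|>1$, so the left side exceeds $n$, whence $\prod_{k=1}^{n-1}|a-z_k|>n$ while all $z_k$ stay in $\overline{D}$. I would then couple this with the centroid identity $\sum_{j}\zeta_j=\tfrac{n-1}{n}\bigl(a+\sum_k z_k\bigr)$ and a few further Newton power sums to constrain the joint configuration of the $z_k$ and the $\zeta_j$: forcing $\prod|a-z_k|$ so large while confining the zeros to $\overline{D}$ and the critical points to the lune should drive the data toward a narrow family of near-extremal configurations close to $z^n-c$ (for which all critical points collapse to the origin and equality is approached).

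The decisive step is then a single inequality ruling out this near-extremal family. Here I would study $P'/P=\sum_k 1/(z-w_k)$ (with $w_n=a$) on the boundary circle $|z-a|=1$ and use an argument-principle count to force a zero of $P'$ inside $D(a,1)$; equivalently one can invoke an apolarity argument, building a low-degree polynomial apolar to a suitable factor of $P'$ and applying Grace's theorem to locate a critical point in $D(a,1)$. Either route reduces the conjecture to showing that the contradiction hypothesis makes an explicit functional of the configuration strictly negative.

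The hard part is to make this core inequality hold uniformly in the degree $n$. The available margin degenerates to zero precisely along the extremal family $z^n-c$ as $n\to\infty$ and as $a\to 1$, so any crude estimate yields the inequality only once $n$ exceeds a bound depending on $a$. Establishing it for \emph{every} $n$ — in particular across the intermediate regime of moderate degrees with $a$ bounded away from both $0$ and $1$ — requires matching the exact rate at which the configuration constraints tighten against the shrinking margin. This uniform control, rather than the contradiction setup or the apolarity machinery, is the crux on which the full statement turns.
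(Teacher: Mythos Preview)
The statement you are attempting is the full Sendov conjecture, and the paper does \emph{not} prove it: it is introduced there explicitly as a Conjecture, and the paper's actual contribution is the partial result that for each fixed $a\in(0,1)$ the conclusion holds once $\deg P\ge N$ for an explicit $N=N(a)$. So there is no proof of this statement in the paper to compare against, only a proof of the high-degree case.

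Your proposal is an outline, not a proof, and you acknowledge this in the final paragraph: the ``uniform control'' across all $n$ is precisely what is open, and nothing earlier in your sketch supplies it. Two of the suggested mechanisms are known obstacles rather than tools here. The argument-principle count of zeros of $P'$ inside $|z-a|=1$ via $P'/P$ on that circle is not directly usable because zeros $z_k$ may sit on or arbitrarily near that circle, so the boundary integral is uncontrolled without an a priori lower bound on $|P|$ there---essentially the conjecture restated. Likewise, invoking Grace/apolarity to place a critical point in $D(a,1)$ requires exhibiting a polynomial apolar to $P'$ with all its zeros in that disk, and you give no construction; none is known in general. For contrast, the paper's route to the high-degree case is different from either idea: it fixes $c\in(0,a)$ and squeezes $|P(c)|$ between an upper bound $1+a$ and a lower bound of the form $C\,K^{n-1}$ with an explicit $K>1$, using Walsh's contraction principle, the perpendicular-bisector theorem, and several convexity lemmas that exploit the fact that under the contradiction hypothesis the $z_k$ must cluster near $|z|=1$ and the $\zeta_j$ near $|z-a|=1$. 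The two bounds collide for large $n$, yielding only the partial result---consistent with your own observation that the available margin vanishes along the extremal family $z^n-c$.
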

This conjecture appears for the first time in 1967 Hayman's book
{\it Research Problems in Function Theory} where it was improperly
attributed to the Bulgarian mathematician Illief. Since 1967 it was proved 
for a few particular cases, for example :
polynomials having at most $8$ distinct zeros  \cite{BX},
when $|a|=1$ \cite{Rub}, if $P$ vanishes at $0$, when zeros of $P$ and $P'$ are real
\cite{RS},
when all the summits of the convex hull of the zeros of $P$ lie on the unit
circle \cite{Sch}, but the general case is still open in spite of 80 papers
devoted to it. Surveys of the problem have been 
given by M. Marden \cite{Mar2} and Bl. Sendov \cite{Sen} and we refer the reader to these 
for further information and bibliographies. 

Fix $0<a<1$, in this paper we prove that there exists an integer $N$ such that
Sendov's conjecture is true for all polynomial of degree bigger than $N$.
Assuming that $P$ contradicts Sendov's conjecture,
we estimate below and above the
positive real number $|P(c)|$ for some $c$ satisfying $0<c<a$.
This leads to a contradiction for high values of the degree of $P$. 

\section{Background}

Denote by $\mathbb C_n[X]$ the $\mathbb C$-vector 
space of complex polynomials of degree less or equal to $n$.

\begin{definition}[Hermitian inner product]
	Consider $P$ and $Q\in\mathbb C_{n}[X]$ such that 
	$P(z) = a_{0} + a_{1} z + \cdots + a_{n} z^n$ and $Q(z) = b_{0} + b_{1} z + \cdots + 
	b_{n} z^n$, we define :
	\[
	\bombscal{P}{Q}{n} := \sum_{i=0}^n \binom{n}{i}^{-1} a_{i}\, 
	\overline{b_{i}}
	\]
\end{definition}

\begin{proposition}\label{rem:1}
For all $P\in\mathbb C_n[X]$ and $\alpha\in\mathbb C$ :
\[ 
P(\alpha) = \bombscal{P}{(\overline{\alpha} X+1)^{n}}{n}
\qquad\text{and}\qquad 
P'(\alpha) = n \bombscal{P}{X (\overline{\alpha}X+1)^{n-1}}{n} 
\]
\end{proposition}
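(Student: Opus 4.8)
The plan is to exploit the fact that both sides of each identity are $\mathbb{C}$-linear in $P$, so it suffices to verify them on the monomial basis $P = X^k$, $0 \le k \le n$, of $\mathbb{C}_n[X]$. The only other ingredient is the binomial expansion of the two test polynomials appearing on the right-hand sides, together with the elementary relation between binomial coefficients $\binom{n}{k}^{-1}\binom{n-1}{k-1} = k/n$.

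For the first identity I would expand $(\overline{\alpha}X+1)^n = \sum_{i=0}^n \binom{n}{i}\overline{\alpha}^{\,i} X^i$, so that its $i$-th coefficient is $b_i = \binom{n}{i}\overline{\alpha}^{\,i}$, hence $\overline{b_i} = \binom{n}{i}\alpha^i$. Taking $P = X^k$, only the term $i = k$ survives in the defining sum of $\bombscal{\cdot}{\cdot}{n}$, and
\[
\bombscal{X^k}{(\overline{\alpha}X+1)^n}{n} = \binom{n}{k}^{-1}\cdot 1 \cdot \overline{b_k} = \binom{n}{k}^{-1}\binom{n}{k}\alpha^k = \alpha^k ,
\]
which is exactly the monomial $X^k$ evaluated at $\alpha$. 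Summing over $k$ weighted by the coefficients of $P$ gives $P(\alpha) = \bombscal{P}{(\overline{\alpha}X+1)^n}{n}$.

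For the derivative identity I would similarly expand, after the index shift $i = j+1$, $X(\overline{\alpha}X+1)^{n-1} = \sum_{i=1}^n \binom{n-1}{i-1}\overline{\alpha}^{\,i-1}X^i$, so that its $i$-th coefficient $c_i$ satisfies $\overline{c_i} = \binom{n-1}{i-1}\alpha^{i-1}$ for $1 \le i \le n$ while $c_0 = 0$. With $P = X^k$ for $k \ge 1$ (the case $k = 0$ being trivial), only $i = k$ contributes, and
\[
n\,\bombscal{X^k}{X(\overline{\alpha}X+1)^{n-1}}{n} = n\binom{n}{k}^{-1}\binom{n-1}{k-1}\alpha^{k-1} = k\,\alpha^{k-1},
\]
using $\binom{n}{k}^{-1}\binom{n-1}{k-1} = k/n$ (immediate after writing the binomials as factorials). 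Since $k\alpha^{k-1}$ is the derivative of $X^k$ at $\alpha$, linearity in $P$ concludes.

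There is no genuine obstacle here: the statement is a bookkeeping identity, and the only points needing a little care are tracking the complex conjugation (conjugating $\overline{\alpha}^{\,i}$ restores $\alpha^i$) and carrying out the index shift in the second expansion correctly. Alternatively, one can skip the reduction to monomials and substitute the two expansions directly into the definition of $\bombscal{\cdot}{\cdot}{n}$, in which case the factors $\binom{n}{i}$ cancel against their inverses $\binom{n}{i}^{-1}$ in the first identity, and collapse via $\binom{n}{i}^{-1}\binom{n-1}{i-1} = i/n$ to produce $\sum_{i} i\,a_i\alpha^{\,i-1}$ in the second.
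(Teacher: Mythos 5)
Your proof is correct and is precisely the ``immediate computation'' the paper omits: expanding the test polynomials binomially and using $\binom{n}{k}^{-1}\binom{n-1}{k-1}=k/n$ is the only natural route, and your bookkeeping (including the conjugation and the index shift) is accurate. Nothing to add.
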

\begin{proof}[Proof]
It is an immediate computation.
\end{proof}

\begin{theorem}[Walsh contraction principle \cite{Walsh}]
\label{thm:Wal}
Let $n\in\mathbb N$, $P\in\mathbb C_n[X]$ and $\alpha_1,...,\alpha_n\in\mathcal C$ 
where $\mathcal C$ is 
a circular region  of the complex projective plane
there exits $\beta\in\mathcal C$ such that :
\[
 \bombscal{P}{(\overline{\alpha_1}X+1)\cdots(\overline{\alpha_n}X+1)}{n}
	= \bombscal{P}{(\overline{\beta}X+1)^{n}}{n} 
\]
\end{theorem}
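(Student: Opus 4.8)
The plan is to recast the identity as a statement about apolar polynomials and then to appeal to Grace's apolarity theorem. First I would unwind the left-hand side: expanding $(\overline{\alpha_1}X+1)\cdots(\overline{\alpha_n}X+1)=\sum_{i=0}^{n}e_i(\overline{\alpha_1},\dots,\overline{\alpha_n})\,X^{i}$ (with $e_i$ the elementary symmetric polynomials) and using that $\bombscal{\cdot}{\cdot}{n}$ is conjugate-linear in its second argument while the $e_i$ have integer coefficients, one gets
\[
\bombscal{P}{(\overline{\alpha_1}X+1)\cdots(\overline{\alpha_n}X+1)}{n}=\sum_{i=0}^{n}\binom{n}{i}^{-1}a_i\,e_i(\alpha_1,\dots,\alpha_n)=:\Phi(\alpha_1,\dots,\alpha_n),
\]
a polynomial in $(\alpha_1,\dots,\alpha_n)$ that is symmetric and of degree at most one in each variable, and with $\Phi(\beta,\dots,\beta)=\sum_{i}a_i\beta^{i}=P(\beta)=\bombscal{P}{(\overline{\beta}X+1)^{n}}{n}$ by Proposition \ref{rem:1}. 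Setting $v:=\Phi(\alpha_1,\dots,\alpha_n)$, and assuming for simplicity $\deg P=n$ with all data finite (the cases where $\deg P<n$, or where $\infty$ occurs among the $\alpha_j$ or in $\mathcal{C}$, being handled by the analogous argument in the projective setting), the theorem becomes equivalent to the assertion that $P(z)-v$ has a zero lying in $\mathcal{C}$.

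The case $n=2$ already exhibits the whole mechanism. Here $v=a_2\alpha_1\alpha_2+\tfrac{a_1}{2}(\alpha_1+\alpha_2)+a_0$, and the Möbius transformation
\[
T(z)=\frac{(v-a_0)-\tfrac{a_1}{2}\,z}{a_2\,z+\tfrac{a_1}{2}}
\]
has zero trace, hence $T\circ T=\mathrm{id}$; one checks directly that $T(\alpha_1)=\alpha_2$ and that $T(z)=z$ is equivalent to $a_2z^{2}+a_1z+a_0-v=0$, so the two fixed points of $T$ are precisely the zeros of $P-v$. Since every involution of the Riemann sphere is conjugate to $z\mapsto -z$, if both fixed points of $T$ lay outside $\mathcal{C}$ then conjugating by a Möbius map sending them to $0$ and $\infty$ would carry $\mathcal{C}$ onto a closed disk missing the origin --- hence contained in an open half-plane bounded by a line through $0$ --- which $z\mapsto -z$ sends to the opposite half-plane; then $T(\mathcal{C})\cap\mathcal{C}=\emptyset$, contradicting $\alpha_2=T(\alpha_1)\in\mathcal{C}$. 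Thus a fixed point of $T$, i.e.\ a zero of $P-v$, lies in $\mathcal{C}$ (the degenerate cases $a_2=0$, and $P-v$ having a double zero so that $T$ collapses to a constant, being checked directly).

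For general $n$ the passage to apolarity is a short binomial manipulation: writing $P(z)-v=\sum_{k=0}^{n}\binom{n}{k}p_k z^{k}$ and $g(z):=\prod_{j=1}^{n}(z-\alpha_j)=\sum_{k=0}^{n}\binom{n}{k}q_k z^{k}$, and noting that the coefficient of $z^{n-k}$ in $g$ is $(-1)^{k}e_k(\alpha_1,\dots,\alpha_n)$ so that $q_{n-k}=\binom{n}{k}^{-1}(-1)^{k}e_k(\alpha_1,\dots,\alpha_n)$, one finds
\[
\sum_{k=0}^{n}(-1)^{k}\binom{n}{k}\,p_k\,q_{n-k}=\sum_{k=0}^{n}p_k\,e_k(\alpha_1,\dots,\alpha_n)=\Phi(\alpha_1,\dots,\alpha_n)-v=0,
\]
so $P-v$ and $g$ are apolar. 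Since $g$ has all its zeros $\alpha_1,\dots,\alpha_n$ in the circular region $\mathcal{C}$, Grace's apolarity theorem furnishes a zero $\beta\in\mathcal{C}$ of the apolar polynomial $P-v$, whence $\bombscal{P}{(\overline{\beta}X+1)^{n}}{n}=P(\beta)=v=\bombscal{P}{(\overline{\alpha_1}X+1)\cdots(\overline{\alpha_n}X+1)}{n}$, which is exactly the assertion.

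The substance is therefore Grace's theorem --- two apolar polynomials of equal degree cannot have their zero sets separated by a circle; equivalently, a circular region containing all the zeros of one of them contains at least one zero of the other. I would deduce it from Laguerre's theorem on polar derivatives: if $Q$ has degree $m$ with all its zeros in a circular region $\mathcal{C}$ and $\xi\notin\mathcal{C}$, then the polar derivative $D_\xi Q:=mQ-(X-\xi)Q'$ again has all its zeros in $\mathcal{C}$. For $\xi=\infty$ this is exactly the Gauss--Lucas theorem (a bounded circular region is a closed disk, hence convex); for finite $\xi$ one applies a Möbius change of variable carrying $\xi$ to $\infty$, under which $D_\xi Q$ becomes, up to a nonvanishing factor, the ordinary derivative of the transformed polynomial and $\mathcal{C}$ becomes a closed disk. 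Granting Laguerre, Grace follows by a descent: if $P-v$ had no zero in $\mathcal{C}$, its zeros $\xi_1,\dots,\xi_n$ would all lie in the complementary region, so applying $D_{\xi_1},\dots,D_{\xi_{n-1}}$ successively to $g$ would keep all zeros in $\mathcal{C}$ and end at a linear polynomial with its single zero $\rho\in\mathcal{C}$; one further application of $D_{\xi_n}$ yields the constant $c\,(\xi_n-\rho)$, which an apolarity identity equates with a nonzero multiple of the vanishing invariant $\sum_{k}(-1)^{k}\binom{n}{k}p_k q_{n-k}$, forcing $\xi_n=\rho\in\mathcal{C}$ --- a contradiction. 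I anticipate the two principal hurdles to be the Möbius bookkeeping in passing from Gauss--Lucas to Laguerre's theorem (keeping track of the point at infinity and of the factor relating $D_\xi Q$ to the transformed derivative), and the verification of the identity that presents the fully iterated polar derivative $D_{\xi_n}\!\cdots D_{\xi_1}g$ as a multiple of the apolarity invariant of $g$ and $P-v$; apart from these, the circular-region geometry already visible in the $n=2$ case is the whole idea.
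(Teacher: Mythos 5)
The paper gives no proof of this theorem at all --- it is quoted as a classical result with a citation to Walsh --- so your argument can only be measured against the literature, and it is the standard proof of Walsh's coincidence theorem (see Marden \cite{mard}). Your reduction is correct: since the $e_i$ have real coefficients, the left-hand side equals the polarization $\Phi(\alpha_1,\dots,\alpha_n)=\sum_i\binom{n}{i}^{-1}a_i\,e_i(\alpha_1,\dots,\alpha_n)$, the right-hand side equals $P(\beta)$ by Proposition \ref{rem:1}, and your apolarity computation $\sum_k(-1)^k\binom{n}{k}p_kq_{n-k}=\sum_k p_k e_k(\alpha)=\Phi(\alpha)-v=0$ checks out, so the statement does reduce to Grace's theorem applied to $P-v$ and $\prod_j(z-\alpha_j)$; the $n=2$ involution argument is also a correct self-contained verification of that case. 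Two items you defer are genuinely needed rather than cosmetic: (i) the theorem as stated allows $\deg P<n$ and allows $\mathcal C$ to be a circular region of the projective plane (possibly containing $\infty$), so $P-v$ may have ``zeros at infinity'' and the projective form of Grace's theorem must actually be invoked; (ii) the identity presenting the fully iterated polar derivative $D_{\xi_n}\cdots D_{\xi_1}g$ as a nonzero multiple of the apolarity invariant has to be proved (a routine induction). Neither is a wrong turn --- both are carried out in Marden --- so the proposal is sound, and arguably more informative than the paper, which merely cites the result.
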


We deduce a result which seem not already known in spite of its simplicity.

\begin{theorem}[]\label{thm:3}
Let $P$ be a polynomial of $\mathbb C_n[X]$ and $\delta$ a complex number
satisfying $P'(\delta) \ne 0$. For any complex number $\omega$, the polynomial 
$P(z)-\omega$ has a zero in the disk whose diameter is the line segment
$\left[\delta, \delta - \frac{n(P(\delta) -\omega)}{P'(\delta)}\right]$.
\end{theorem}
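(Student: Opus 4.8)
The plan is to reduce to the case $\omega=0$ and then combine a partial-fraction identity with a pigeonhole argument on real parts. First I would replace $P$ by $Q:=P-\omega$; since $Q'=P'$ we still have $Q'(\delta)\neq 0$, and it suffices to produce a zero of $Q$ in the closed disk $D$ with diameter $[\delta,\ \delta-t]$, where $t:=\dfrac{nQ(\delta)}{Q'(\delta)}$. If $Q(\delta)=0$ then $t=0$, $D$ collapses to $\{\delta\}$, and $\delta$ is the required zero; so I may assume $Q(\delta)\neq 0$. Moreover $Q$ is non-constant (else $Q'\equiv 0$), so I can write $Q(z)=c\prod_{k=1}^{d}(z-z_k)$ with $1\le d=\deg Q\le n$, $c\neq 0$, and every $z_k\neq\delta$.

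The single computational ingredient I need is a clean description of membership in $D$. Expanding $\bigl|z-(\delta-\tfrac t2)\bigr|\le \tfrac{|t|}{2}$ and dividing through after substituting $w=\tfrac{1}{\delta-z}$, one finds, for $z\neq\delta$,
\[
z\in D \quad\Longleftrightarrow\quad \operatorname{Re}\!\left(\frac{t}{\delta-z}\right)\ \ge\ 1 .
\]
Equivalently, the inversion $z\mapsto 1/(\delta-z)$ sends $\delta$ to $\infty$ and the boundary circle of $D$ to the line $\operatorname{Re}(tw)=1$, with the interior of $D$ going to the half-plane $\operatorname{Re}(tw)>1$.

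Finally I would use the logarithmic derivative $\dfrac{Q'(\delta)}{Q(\delta)}=\sum_{k=1}^{d}\dfrac{1}{\delta-z_k}$ to compute
\[
\sum_{k=1}^{d}\operatorname{Re}\!\left(\frac{t}{\delta-z_k}\right)
=\operatorname{Re}\!\left(t\cdot\frac{Q'(\delta)}{Q(\delta)}\right)
=\operatorname{Re}(n)=n\ \ge\ d .
\]
A sum of $d$ real numbers that totals at least $d$ must have a term $\ge 1$; for the corresponding index $k$ the equivalence above gives $z_k\in D$, which is the sought zero of $P-\omega$.

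I do not anticipate a real obstacle: the proof is essentially one identity plus one elementary inequality plus pigeonhole. The points needing care are the degenerate cases $Q(\delta)=0$ and $\deg Q<n$ (the latter harmless, since the pigeonhole bound $n/d\ge 1$ only improves), and pinning down the correct orientation in the equivalence $z\in D\Leftrightarrow\operatorname{Re}(t/(\delta-z))\ge 1$. One could instead route the argument through the Walsh contraction principle or Laguerre's theorem, but the direct computation above looks the shortest.
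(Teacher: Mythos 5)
Your proof is correct, and it takes a genuinely different route from the paper's. The paper deduces Theorem \ref{thm:3} from the Walsh contraction principle: with $R=\frac{P(\delta)-\omega}{P'(\delta)}$, the identity $RP'(\delta)-P(\delta)+\omega=0$ is rewritten, via the inner-product formulas of Proposition \ref{rem:1}, as the apolarity condition $\bombscal{P-\omega}{(\overline{(\delta-nR)}X+1)(\overline{\delta}X+1)^{n-1}}{n}=0$, and Theorem \ref{thm:Wal} then contracts the $n$ points $\delta,\dots,\delta,\delta-nR$ (all in the disk of diameter $[\delta,\delta-nR]$) to a single $\lambda$ in that disk with $P(\lambda)=\omega$. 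You instead argue directly on $Q=P-\omega=c\prod_{k=1}^{d}(z-z_k)$: your M\"obius characterization of the disk ($z\in D\iff\Re\bigl(t/(\delta-z)\bigr)\ge 1$, obtained from $|u|^2\le\Re(t\bar u)$ with $u=\delta-z$) is right, the logarithmic-derivative identity gives $\sum_k\Re\bigl(t/(\delta-z_k)\bigr)=\Re(n)=n\ge d$, and pigeonhole places some $z_k$ in $D$; the degenerate cases $Q(\delta)=0$ and $\deg Q<n$ are handled correctly. This is essentially the classical proof of Laguerre's separation theorem: it is more elementary and self-contained, needs no apolarity machinery, and even exhibits an actual root $z_k$ of $P-\omega$ in the disk. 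What the paper's route buys is economy within its own framework: the Bombieri--Walsh inner product and Walsh's coincidence theorem are already set up for other purposes, so the theorem follows in a few lines once they are quoted. Either proof is acceptable; yours could even be preferred for a reader who does not want to invoke Theorem \ref{thm:Wal}.
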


\begin{proof}[Proof]
Write $R=\frac{P(\delta)-\omega}{P'(\delta)}$, 
using proposition \ref{rem:1} we obtain :
\begin{align*}
 & R P'(\delta) - P(\delta) + \omega = 0 \\
\iff & R \, n \bombscal{P}{X(\overline{\delta}X+1)^{n-1}}{n} -
\bombscal{P-\omega}{(\overline{\delta}X+1)^{n}}{n} = 0 \\
\iff & R \, n \bombscal{P - \omega}{X(\overline{\delta}X+1)^{n-1}}{n} -
\bombscal{P-\omega}{(\overline{\delta}X+1)^{n}}{n} = 0 \\
\iff & \bombscal{P - \omega}{(\overline{(\delta - n R)}X+1)
(\overline{\delta}X+1)^{n-1}}{n} = 0 
\end{align*}
Theorem \ref{thm:Wal} implies that there exits
$\lambda$ in the disk of diameter $[\delta,\delta - nR] $ satisfying :
\[
\bombscal{P-\omega}{(\overline\lambda X+1)^n}{n} = 0 \iff P(\lambda) - \omega = 0
\]
this proves the theorem.
\end{proof}

\begin{theorem}[Perpendicular bisector theorem]\label{media}
Let $P$ denote a polynomial, $\alpha$ and $\beta$ two complex numbers such that
$P(\alpha)=P(\beta)$ then the perpendicular bisector of the line segment 
$[\alpha,\beta]$ intersects the
convex hull of the zeros of $P'$ (i.e. each half-plane delimited by the bisector
contains at least one zero of $P'$). 
\end{theorem}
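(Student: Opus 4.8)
The plan is to deduce Theorem~\ref{media} from the apolarity formalism of Section~2, exactly in the spirit of the proof of Theorem~\ref{thm:3}: turn the scalar condition $P(\alpha)=P(\beta)$ into an orthogonality relation, factor the relevant polynomial so that its roots land precisely on the perpendicular bisector, and then feed this into Walsh's contraction principle applied to $P'$.

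First I would clear the trivial cases ($\alpha=\beta$, or $\deg P\le 1$, for which $P(\alpha)=P(\beta)$ already forces $\alpha=\beta$) and set $n=\deg P\ge 2$, $\omega=e^{2i\pi/n}$. The central objects are the $n-1$ complex numbers
\[
\mu_j=\frac{\omega^{j}\alpha-\beta}{\omega^{j}-1}\qquad (j=1,\dots ,n-1),
\]
well defined because $\omega^{j}\neq 1$. A one-line computation gives $\mu_j-\alpha=\dfrac{\alpha-\beta}{\omega^{j}-1}$ and $\mu_j-\beta=\dfrac{\omega^{j}(\alpha-\beta)}{\omega^{j}-1}$; since $|\omega^{j}|=1$ these two numbers have equal modulus, so \emph{every $\mu_j$ lies on the perpendicular bisector $\ell$ of $[\alpha,\beta]$}. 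This is the one place real thought is needed — recognising that the roots of a difference of $n$-th powers of affine forms sit on that bisector; the rest is bookkeeping.

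Next, by Proposition~\ref{rem:1} the hypothesis $P(\alpha)=P(\beta)$ reads $\bombscal{P}{(\overline{\alpha}X+1)^{n}-(\overline{\beta}X+1)^{n}}{n}=0$. Applying $u^{n}-v^{n}=\prod_{j=0}^{n-1}(u-\omega^{j}v)$ with $u=\overline{\alpha}X+1$, $v=\overline{\beta}X+1$: the factor $j=0$ is $(\overline{\alpha}-\overline{\beta})X$, and for $j\ge 1$ one checks $(\overline{\alpha}X+1)-\omega^{j}(\overline{\beta}X+1)=(1-\omega^{j})(\overline{\mu_j}X+1)$; since $\prod_{j=1}^{n-1}(1-\omega^{j})=n$ this gives
\[
(\overline{\alpha}X+1)^{n}-(\overline{\beta}X+1)^{n}=n(\overline{\alpha}-\overline{\beta})\,X\prod_{j=1}^{n-1}(\overline{\mu_j}X+1),
\]
hence, as $\alpha\neq\beta$, $\bombscal{P}{X\prod_{j=1}^{n-1}(\overline{\mu_j}X+1)}{n}=0$. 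A direct computation from the definition of the inner product (equivalently, Proposition~\ref{rem:1} applied to both $P$ and $P'$) yields $\bombscal{P}{XR}{n}=\tfrac1n\bombscal{P'}{R}{n-1}$ for every $R\in\mathbb C_{n-1}[X]$, so
\[
\bombscal{P'}{\prod_{j=1}^{n-1}(\overline{\mu_j}X+1)}{n-1}=0 .
\]

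Finally I would apply Theorem~\ref{thm:Wal} to $P'\in\mathbb C_{n-1}[X]$ with the $n-1$ points $\mu_1,\dots ,\mu_{n-1}$, taking for $\mathcal C$ either of the two closed half-planes of the projective plane bounded by $\ell$: since all the $\mu_j$ lie in $\ell\subset\mathcal C$, there is $\beta'\in\mathcal C$ with $\bombscal{P'}{(\overline{\beta'}X+1)^{n-1}}{n-1}=\bombscal{P'}{\prod(\overline{\mu_j}X+1)}{n-1}=0$, i.e.\ $P'(\beta')=0$; and $\beta'\neq\infty$ because the value of this form at $\infty$ is the leading coefficient of $P'$, nonzero since $\deg P'=n-1$. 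Thus each closed half-plane bounded by $\ell$ contains a zero of $P'$, which is the assertion. The apparent danger $\overline{\alpha}^{\,n}=\overline{\beta}^{\,n}$ (which drops the degree of the left-hand difference and makes some $\mu_j$ equal $0$) is harmless: the identities above defining and locating the $\mu_j$ are purely algebraic and still hold, and in that case $0$ does lie on $\ell$ since then $|\alpha|=|\beta|$. (A more elementary route — writing $0=(\beta-\alpha)\int_0^1 P'\big((1-t)\alpha+t\beta\big)\,dt$ and forcing the integrand into an open half-plane through the origin — genuinely breaks down when $P'$ vanishes on the open segment $(\alpha,\beta)$ away from the midpoint, which is why I go through Walsh.)
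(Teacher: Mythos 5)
Your proof is correct, and it is in the same circle of ideas as the paper, which gives no argument at all but simply declares the statement ``a corollary of Grace's theorem'' with a reference to Marden: your derivation via Walsh's contraction principle (the paper's Theorem~\ref{thm:Wal}, equivalent to Grace's apolarity theorem) is precisely a worked-out version of that corollary. The key identities all check out --- $\mu_j-\alpha=(\alpha-\beta)/(\omega^j-1)$ and $\mu_j-\beta=\omega^j(\alpha-\beta)/(\omega^j-1)$ place the $\mu_j$ on the bisector, the factorization $(\overline{\alpha}X+1)^n-(\overline{\beta}X+1)^n=n(\overline{\alpha}-\overline{\beta})X\prod_j(\overline{\mu_j}X+1)$ holds formally even in the degenerate case $\alpha^n=\beta^n$, and the identity $\bombscal{P}{XR}{n}=\tfrac1n\bombscal{P'}{R}{n-1}$ follows from $i\binom{n-1}{i-1}^{-1}=n\binom{n}{i}^{-1}$ --- and you correctly exclude the point at infinity by noting that $\deg P'=n-1$ exactly, so the paper's unproved lemma is fully justified by your argument.
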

This result is a corollary of grace's theorem (see \cite{mard}).

\section{Notations}

From now on till the end of the paper $P$ denotes a monic polynomial which
contradicts Sendov's conjecture. Set the following notations :
\begin{align*}
P(z) & =  (z-a) \prod_{k=1}^{n-1} (z - z_k)  \\
P'(z) & =  n \prod_{k=1}^{n-1} (z - \zeta_k) 
\end{align*}
where $|a|\ie 1$, $|z_k| \ie 1$, 
$|\zeta_k|\ie 1$ (by Gauss-Lucas theorem) and $|a-\zeta_k| \se 1$ 
for $k=1,...,n-1$. Without loss of generality we may assume that $0<a<1$. 
We call $m$ the real part of the centroid of the zeros of $P$, it is a well known 
property that this centroid is invariant under
derivation thus :
\[
m=  \frac{1}{n}\Re\left(a+\sum_{k=1}^{n-1} z_k \right) 
=  \frac{1}{n-1}\Re\left(\sum_{k=1}^{n-1} \zeta_k \right)
\]
We define $p$ and $q$ by :
\[ p=\frac{a/2-m}{1-a/2}\in[0,1] \qquad\text{and}\qquad q=\frac{a/2-m}{1+a/2}\in[0,1]\]

\begin{theorem}[]\label{exclu}
Let $c$ denote a real number with $0<c<a$, $P$ has no zero in the disk 
of center $c$ and radius $1-\sqrt{1+c(c-a)}$.
\end{theorem}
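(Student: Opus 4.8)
The plan is to confront the hypothesis that $P$ violates Sendov's conjecture --- which confines every critical point $\zeta_k$ to $\overline{D}(0,1)\setminus D(a,1)$ --- with the perpendicular bisector theorem applied to the zero $a$ together with a hypothetical zero lying in the small disk $D(c,r)$, where $r=1-\sqrt{1+c(c-a)}$, and to show the two are incompatible. First I would record where the critical points are: for each $k$, Gauss-Lucas gives $|\zeta_k|\ie 1$ and the violation of Sendov's inequality gives $|a-\zeta_k|\se 1$; from $|a-\zeta_k|^2\se 1\se|\zeta_k|^2$ one gets $a^2-2a\Re\zeta_k\se 0$, that is $\Re\zeta_k\ie a/2$. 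Hence every $\zeta_k$, and so the whole convex hull of the critical points, lies in the convex set $K:=\overline{D}(0,1)\cap\{\Re w\ie a/2\}$. This $K$ is the circular segment of the unit disk cut off by the chord through $v_{\pm}:=\tfrac a2\pm i\sqrt{1-\tfrac{a^2}{4}}$, two points lying simultaneously on $|w|=1$ and on $|w-a|=1$; its extreme points are exactly the points of the arc $\{\,|w|=1,\ \Re w\ie a/2\,\}$.

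Suppose, for contradiction, that $P$ has a zero $z$ with $|z-c|<r$. One checks $|a-c|=a-c>r$ (squaring reduces this to $(a-c)(2-a)>0$), so $z\neq a$, and Theorem~\ref{media} applied to the zeros $a$ and $z$ says the perpendicular bisector $L$ of $[a,z]$ meets the convex hull of the $\zeta_k$, hence meets $K$: there is $w_0\in K$ with $|w_0-z|=|w_0-a|$. I will contradict this by showing $|w-z|<|w-a|$ for \emph{every} $w\in K$. The function $h(w):=|w-z|^2-|w-a|^2=2\Re\big(\overline{w}(a-z)\big)+|z|^2-a^2$ is affine on $\mathbb{R}^2$, so its maximum over $K$ is attained at an extreme point, i.e.\ on the arc above. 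Writing $w=e^{i\theta}$ and $a-z=|a-z|\,e^{i\psi}$, the bound $r<a-c$ gives $\Re(a-z)>0$ and moreover $\cos\psi=\tfrac{a-\Re z}{|a-z|}>\tfrac a2$, so on the arc $\{\cos\theta\ie a/2\}$ the quantity $\cos(\theta-\psi)$, hence $h$, is maximized at a corner $v_{\pm}$. Now $|v_{\pm}|=|v_{\pm}-a|=1$, while $|c-v_{\pm}|^2=\big(c-\tfrac a2\big)^2+\big(1-\tfrac{a^2}{4}\big)=1-c(a-c)=(1-r)^2$, so the triangle inequality yields $|z-v_{\pm}|\ie|z-c|+|c-v_{\pm}|<r+(1-r)=1$; hence $h(v_{\pm})=|z-v_{\pm}|^2-1<0$, so $h<0$ throughout $K$, contradicting $h(w_0)=0$.

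The delicate point is the last step, namely controlling the affine function $h$ on the \emph{curved} part of $\partial K$: one must see that $\arg(a-z)$ stays in the sector $|\psi|<\arccos(a/2)$, so that $h$ restricted to the arc peaks at a corner $v_{\pm}$ and not in the arc's interior. Concretely this asserts that the minimum of $(4-a^2)(a-\Re z)^2-a^2(\Im z)^2$ over $\{\,|z-c|\ie r\,\}$ is positive; since the only interior critical point of this quadratic is $z=a\notin\overline{D}(c,r)$ and on the bounding circle the quadratic decreases toward $z=c+r$, the minimum equals $(4-a^2)(a-c-r)^2$, again positive by $r<a-c$. Everything else rests only on Gauss-Lucas, Theorem~\ref{media}, the triangle inequality, and the elementary chain $0<r<a-c$; the exact radius $r=1-\sqrt{1+c(c-a)}$ is forced by the requirement $|z-c|+|c-v_{\pm}|<1$ together with $|c-v_{\pm}|=\sqrt{1+c(c-a)}$.
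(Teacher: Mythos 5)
Your proposal is correct, but it takes a genuinely different route from the paper's own proof, and in fact a more complete one. The paper also invokes Theorem~\ref{media}, but it extracts a single critical point $\zeta$ in the half-plane nearer to $a$, so that $|\gamma-\zeta|\ge|a-\zeta|\ge 1$ (writing $\gamma$ for your $z$), chains the triangle inequality through $c$ to get $|c-\zeta|\ge\sqrt{1+c(c-a)}$, deduces $\Re(\zeta)\le a/2$, and declares this incompatible with $|\zeta|\le 1$ and $|a-\zeta|\ge 1$. You instead show that the \emph{whole} convex set $K=\overline{D}(0,1)\cap\{\Re w\le a/2\}$ containing the critical points lies strictly on the $\gamma$-side of the bisector, by maximizing the affine function $h(w)=|w-z|^2-|w-a|^2$ over the extreme points of $K$ and locating the maximum at the corners $v_\pm$, where the identity $|c-v_\pm|=\sqrt{1+c(c-a)}$ makes the radius exactly right. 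Your version buys genuine rigor: the three conditions $|\zeta|\le 1$, $|a-\zeta|\ge 1$ and $|c-\zeta|\ge\sqrt{1+c(c-a)}$ that the paper's chain retains are simultaneously satisfiable (e.g.\ by $\zeta=-1$, which lies in the lune and has $|c-\zeta|=1+c$), so the contradiction cannot be closed from them alone; it must use the full half-plane condition $|\gamma-\zeta|\ge|a-\zeta|$, which is precisely the inequality $h(\zeta)\ge 0$ that you refute pointwise on $K$. Likewise, your check that $\arg(a-z)$ stays inside the angular opening of the arc --- so that $h$ restricted to the arc peaks at a corner rather than at an interior point --- is a necessary step, not a pedantic one, and your reduction of it to the positivity of $(4-a^2)(a-c-r)^2$ is sound.
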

\begin{proof}[Proof]
On the contrary, suppose that there exists a zero $\gamma$ of $P$ such that 
$|c-\gamma|\ie 1-\sqrt{1+c(c-a)}$, since $P(\gamma)=P(a)$ by perpendicular bisector 
theorem there exists a zero $\zeta$ of $P'$  such that
$|\gamma-\zeta|\se |a-\zeta|\se 1$ therefore :
\[
|c-\zeta| \se 1 - |\gamma -c| \se \sqrt{1+c(c-a)}
\] 
then
\[
c^2+1-2 c \Re(\zeta) \se c^2 +1 - a c \iff \Re(\zeta) \ie \frac{|\zeta^2|-1}{2c} + 
\frac{a}{2}
\]
We deduce
$\Re(\zeta) \ie a/2$ which is impossible 
because $|\zeta|\ie 1$ and $|a-\zeta|\se 1$, this proves the result.
\end{proof}

\section{Lemmas}

We give in this part some technical inequalities, for the convenience of the 
reader the proofs are differ to the end of the paper.
\begin{lemma}\label{lem1}
Let $\delta$ denote a real number satisfying $0<\delta<1$ and let $a_1$,...,$a_n$ denote 
complex numbers belonging to the closed unit disk, then
\[
\prod_{k=1}^n \left|\delta-a_k\right| \ie 
\left(\sqrt{1+\delta^2-2\delta s}\right)^n
\]
where $s=\frac{1}{n}\Re\left(\sum_{k=1}^n a_k\right)$.
\end{lemma}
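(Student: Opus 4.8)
The plan is to reduce everything to the elementary arithmetic--geometric mean inequality. First I would use the hypothesis $|a_k|\le 1$ to control each factor: expanding,
\[
|\delta-a_k|^2 = \delta^2 + |a_k|^2 - 2\delta\,\Re(a_k) \le 1 + \delta^2 - 2\delta\,\Re(a_k),
\]
so that $\prod_{k=1}^n|\delta-a_k|^2 \le \prod_{k=1}^n\bigl(1+\delta^2-2\delta\,\Re(a_k)\bigr)$. This is the only place where the unit-disk assumption on the $a_k$ enters the argument.

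Next I would verify positivity so that AM--GM is legitimate. Since $\Re(a_k)\in[-1,1]$ and $0<\delta<1$, each factor satisfies $1+\delta^2-2\delta\,\Re(a_k)\ge 1+\delta^2-2\delta=(1-\delta)^2>0$. Hence the $n$ real numbers $1+\delta^2-2\delta\,\Re(a_k)$ are all positive, and the AM--GM inequality yields
\[
\prod_{k=1}^n\bigl(1+\delta^2-2\delta\,\Re(a_k)\bigr) \le \left(\frac1n\sum_{k=1}^n\bigl(1+\delta^2-2\delta\,\Re(a_k)\bigr)\right)^{\!n} = \bigl(1+\delta^2-2\delta s\bigr)^n,
\]
the final equality holding because $s=\frac1n\Re\!\left(\sum_{k=1}^n a_k\right)=\frac1n\sum_{k=1}^n\Re(a_k)$. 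Taking square roots of both sides (the right-hand side being nonnegative) gives $\prod_{k=1}^n|\delta-a_k|\le\bigl(\sqrt{1+\delta^2-2\delta s}\bigr)^n$, which is exactly the claimed inequality.

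There is essentially no hard step here: the only point requiring a moment's care is the positivity of the averaged quantities, needed both to apply AM--GM and to pass to square roots, and this is immediate from $0<\delta<1$. I would also remark that equality is attained precisely when all the $a_k$ coincide and equal a common real number in $[-1,1]$, which shows the estimate is sharp and explains why the later applications lose nothing by using it.
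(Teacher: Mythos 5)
Your proof is correct and is essentially the paper's argument in different clothing: bounding $|\delta-a_k|^2$ by $1+\delta^2-2\delta\,\Re(a_k)$ and then applying AM--GM is exactly the paper's step of pushing each $a_k$ to the unit circle and applying Jensen's inequality to the concave map $x\mapsto\tfrac12\log(1+\delta^2-2\delta x)$. One small caveat on your closing (non-essential) remark: equality actually requires $|a_k|=1$ for all $k$ together with all $\Re(a_k)$ equal, so the $a_k$ need not coincide nor be real (e.g.\ $a_1=i$, $a_2=-i$ gives equality for $n=2$).
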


\begin{lemma}\label{maj_deri}
Let $\delta$ denote a real number such that $0 < \delta < a <$ and let
$b_1,\ldots,b_{n-1}$ denote complex numbers satisfying $|b_k| \ie 1$ and 
$|b_k - a| \se 1$ for all $k\in\{1,...,n-1\}$, then :
\[
\prod_{k=1}^{n-1} \left| \frac{\delta - b_k}{a - b_k}\right| \ie A^{n-1} 
\]
where $A = \left(\frac{1+\delta}{1+a}\right)^{q} \sqrt{1 + \delta^2 - \delta a}^{1-q}$,
 $q=\frac{a/2-s}{1+a/2}$ and $s = \frac 1 {n-1}\Re\left(\sum_{k=1}^{n-1} 
b_k\right)$.
\end{lemma}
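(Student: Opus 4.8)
The plan is to pass to logarithms, reduce the product estimate to a pointwise inequality on each factor, and then reduce that pointwise inequality to a one--variable statement which I would settle by a short convexity argument. Throughout I read the hypothesis as $0<\delta<a<1$.

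First I would record that the constraints $|b_k|\ie1$ and $|b_k-a|\se1$ force $-1\ie\Re b_k\ie a/2$: indeed $1\ie|b_k-a|^2=|b_k|^2-2a\Re b_k+a^2\ie 1+a^2-2a\Re b_k$ gives $\Re b_k\ie a/2$, while $|b_k|\ie1$ gives $\Re b_k\se-1$. Hence each $t_k:=\dfrac{a/2-\Re b_k}{1+a/2}$ lies in $[0,1]$, and summing gives $\sum_{k=1}^{n-1}t_k=(n-1)q$ and $\sum_{k=1}^{n-1}\frac{1-t_k}{2}=\frac{(n-1)(1-q)}{2}$, so that
\[
A^{\,n-1}=\Big(\tfrac{1+\delta}{1+a}\Big)^{\sum_k t_k}(1+\delta^2-\delta a)^{\sum_k(1-t_k)/2}=\prod_{k=1}^{n-1}\Big(\tfrac{1+\delta}{1+a}\Big)^{t_k}(1+\delta^2-\delta a)^{(1-t_k)/2}.
\]
It therefore suffices to prove, for every $b$ in the region $\{|b|\ie1\}\cap\{|b-a|\se1\}$, the pointwise inequality
\[
\frac{|\delta-b|^2}{|a-b|^2}\ie\Big(\frac{1+\delta}{1+a}\Big)^{2t}(1+\delta^2-\delta a)^{1-t},\qquad t=\frac{a/2-\Re b}{1+a/2},
\]
and then take the product over $k=1,\dots,n-1$.

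Next I would fix $x=\Re b$ (hence $t$) and write $b=x+iy$. Since $(a-x)^2-(\delta-x)^2=(a-\delta)(a+\delta-2x)>0$ (using $0<\delta<a$ and $x\ie a/2$), the left--hand side $\dfrac{(\delta-x)^2+y^2}{(a-x)^2+y^2}$ is increasing in $y^2$, while the right--hand side depends only on $x$. On the slice $\Re b=x$ one has $y^2\ie1-x^2$, with equality exactly when $b$ lies on the unit circle, and the point $x+i\sqrt{1-x^2}$ still belongs to the region when $-1\ie x\ie a/2$. So it is enough to treat $|b|=1$, where the inequality becomes
\[
\frac{1+\delta^2-2\delta x}{1+a^2-2ax}\ie\Big(\frac{1+\delta}{1+a}\Big)^{2t}(1+\delta^2-\delta a)^{1-t},\qquad t=\frac{a/2-x}{1+a/2},\quad x\in[-1,a/2].
\]
As $x$ runs from $a/2$ down to $-1$, the parameter $t$ runs from $0$ to $1$, and a direct substitution shows that the two sides agree at $t=0$ and at $t=1$.

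Finally I would take logarithms of this last display and let $h(t)$ be the right side minus the left side, recalling that $x=a/2-t(1+a/2)$ is affine in $t$. The right--side contribution is affine in $t$, so proving that $h$ is concave reduces to showing that $g(x):=\log(1+\delta^2-2\delta x)-\log(1+a^2-2ax)$ is convex; one computes
\[
g''(x)=\frac{4a^2}{(1+a^2-2ax)^2}-\frac{4\delta^2}{(1+\delta^2-2\delta x)^2},
\]
and $g''(x)\se0$ is equivalent, after clearing denominators, to $a(1+\delta^2-2\delta x)\se\delta(1+a^2-2ax)$, that is, to $(a-\delta)(1-a\delta)\se0$, which holds since $0<\delta<a<1$. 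Then $h$ is concave on $[0,1]$ with $h(0)=h(1)=0$, hence $h\se0$ on $[0,1]$, which is the pointwise inequality needed. I expect the main obstacle to be the weight bookkeeping that makes the product of the pointwise bounds collapse exactly to $A^{n-1}$, together with the reduction to the boundary circle; the convexity computation itself is routine, and the strict inequality $a\delta<1$ is precisely what makes it work.
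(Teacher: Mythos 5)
Your proof is correct and follows essentially the same route as the paper's: reduce each factor to the case $|b_k|=1$, then bound the convex function $x\mapsto\log\frac{1+\delta^2-2\delta x}{1+a^2-2ax}$ on $[-1,a/2]$ by its chord, which is exactly your concavity argument for $h$ with $h(0)=h(1)=0$, combined with the same averaging of the weights $t_k$ into $q$. The only cosmetic difference is that you establish convexity via the clean equivalence $g''\ge 0\iff(a-\delta)(1-a\delta)\ge 0$, whereas the paper writes out $\Phi''$ as a single manifestly nonnegative fraction.
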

\begin{remark}
 Note that $A<1$ and $0\ie q\ie 1$. 
\end{remark}

\begin{lemma}\label{min_deri}
Under the assumptions of lemma \ref{maj_deri}. Let $b>1$, then :
\[
\prod_{k=1}^{n-1} \left| b - b_k\right| \se
\min(B_1,B_2)^{n-1} 
\] 
where $B_1 = \left(1+b-a\right)^{p} (\sqrt{1 + b^2 -  b a})^{1-p}$,
	$p=\frac{a/2-s}{1-a/2}$ and
	$B_2 = \left(1+b\right)^{q} (\sqrt{1 + b^2 - b a})^{1-q}$.
\end{lemma}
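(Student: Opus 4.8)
The plan is to bound the product $\prod_{k=1}^{n-1}|b-b_k|$ from below by controlling each factor in terms of the real part of $b_k$, then optimizing over the admissible configurations. First I would observe that for each $k$, writing $b_k = x_k + iy_k$, we have $|b-b_k|^2 = (b-x_k)^2 + y_k^2$, and since $b>1\geqslant |b_k|$, the quantity $|b-b_k|$ is at least something depending monotonically on $x_k=\Re(b_k)$; more precisely, among points of the closed unit disk with a given real part, $|b-b_k|$ is minimized on the boundary circle, giving $|b-b_k|\geqslant \sqrt{1+b^2-2bx_k}$. This reduces the problem to a lower bound for $\prod_k \sqrt{1+b^2-2bx_k}$ subject to $\frac{1}{n-1}\sum x_k$ being essentially $s$ (up to the $a$ contribution built into the definition of $s$ here).

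\medskip

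The two bounds $B_1$ and $B_2$ should come from the two constraints we are allowed to use: the unit-disk constraint $|b_k|\leqslant 1$ and the Sendov constraint $|b_k-a|\geqslant 1$. For $B_2$: the function $x\mapsto \tfrac12\log(1+b^2-2bx)$ is convex in $x$ on the relevant range, so by a Jensen/extreme-point argument the product $\prod\sqrt{1+b^2-2bx_k}$ with fixed mean of the $x_k$ is minimized when each $x_k$ takes an extreme value; combining with $|b_k|\leqslant1$ (so $x_k\in[-1,1]$, endpoints giving factors $1+b$ and $\sqrt{1+b^2-2b}=|1-b|=b-1$... wait, I'd want the right endpoints) and writing the fraction of mass at each endpoint in terms of the mean, one gets exactly the weighted geometric mean $(1+b)^{q}(\sqrt{1+b^2-ba})^{1-q}$ after substituting the definition of $q$. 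For $B_1$: the constraint $|b_k-a|\geqslant1$ means $b_k$ lies outside the unit disk centered at $a$; intersecting with $|b_k|\leqslant 1$ confines $x_k$ to a range whose left endpoint corresponds to a point with $|b_k-a|=1$, producing the factor $\sqrt{1+b^2-ba}$, while the right endpoint again gives roughly $1+b-a$ (the point $b_k$ diametrically placed so that $b-b_k$ is largest under $|b_k-a|=1$), and the weight is now $p$ because the mean constraint is phrased relative to $1-a/2$. Since $P$ actually satisfies \emph{both} constraints, we may take the larger of the two lower bounds, hence $\min(B_1,B_2)$.

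\medskip

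Concretely I would structure the proof as: (1) reduce to the boundary circle, establishing $|b-b_k|\geqslant\sqrt{1+b^2-2b\Re(b_k)}$, or directly work with a one-variable function $f(x_k)$ that lower-bounds $\log|b-b_k|$ and is convex; (2) for the $B_2$ bound, apply lemma-style convexity to get $\sum f(x_k)\geqslant (n-1)\bigl[\lambda f(\text{left endpoint}) + (1-\lambda)f(\text{right endpoint})\bigr]$ where $\lambda$ is determined by $\frac{1}{n-1}\sum x_k$, then identify $\lambda$, $1-\lambda$ with $q$, $1-q$ using $s=\frac{a/2-q(1+a/2)}{1}$-type algebra and the definition $q=\frac{a/2-s}{1+a/2}$; (3) repeat for $B_1$ using the $|b_k-a|\geqslant1$ constraint and $p=\frac{a/2-s}{1-a/2}$; (4) conclude by taking the minimum. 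This parallels exactly the structure of Lemmas \ref{lem1} and \ref{maj_deri}, so the same machinery applies.

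\medskip

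The main obstacle will be step (2)–(3): verifying that the constrained minimum of $\prod\sqrt{1+b^2-2bx_k}$ is genuinely attained at a two-point configuration and that the weights work out \emph{exactly} to $p$ and $q$ as defined. The convexity of $x\mapsto\log(1+b^2-2bx)$ is clear, but one must be careful about the actual feasible region for $x_k$: the constraint $|b_k|\leqslant1$ alone gives $x_k\in[-1,1]$, but then one checks that the worst case pushes $b_k$ to $\pm1$ on the real axis; with the extra constraint $|b_k-a|\geqslant1$ the left extreme is instead the intersection point of the two circles, and computing $|b-b_k|$ there is the one genuinely fiddly calculation — it should give $\sqrt{1+b^2-ba}$ but I'd want to double-check the sign and the cross term. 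The bookkeeping that turns the mean constraint into precisely the exponents $p$ or $q$ (rather than some messier expression) is the part most likely to require the specific normalizations chosen in the Notations section, and getting $1-p$ and $1-q$ to land correctly is where I'd expect to spend the most care.
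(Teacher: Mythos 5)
There is a genuine gap, and it sits exactly where you suspected. First, your step (1) is false as stated: for a \emph{lower} bound on $|b-b_k|$ at fixed $x=\Re(b_k)$ you must minimize $|\Im(b_k)|$, not push $b_k$ to the unit circle (that maximizes $|b-b_k|$, and indeed $\sqrt{1+b^2-2bx}\geqslant b-x\ $ with strict inequality off the real axis, so $|b-b_k|\geqslant\sqrt{1+b^2-2b\Re(b_k)}$ fails, e.g.\ at $b_k=a-1$). The correct minimizer at fixed $x$ is $y=0$ when $x\in[-1,a-1]$ and the circle $|b_k-a|=1$ when $x\in[a-1,a/2]$, giving the piecewise function $\Phi(x)=\log(b-x)$ resp.\ $\frac12\log\bigl(1+(b-a)(a+b-2x)\bigr)$ that the paper works with. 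Relatedly, $x\mapsto\log(1+b^2-2bx)$ is \emph{concave}, not convex; concavity is what makes the constrained minimum sit at a configuration supported on interval endpoints (Jensen for a convex function would instead bound the sum below by its value at the mean, which is a different — and here unavailable — conclusion).

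Second, and more seriously, your mechanism for producing $\min(B_1,B_2)$ does not work. You propose to derive $B_2$ from $|b_k|\leqslant1$ alone and $B_1$ from $|b_k-a|\geqslant1$ alone and then "take the larger of the two lower bounds, hence $\min(B_1,B_2)$" — but the larger of two valid lower bounds is the $\max$, and in fact \emph{neither} $B_1^{n-1}$ nor $B_2^{n-1}$ is separately a valid lower bound. The factor $\sqrt{1+b^2-ab}$ appearing in $B_2$ is the value of $|b-b_k|$ at $\Re(b_k)=a/2$ \emph{on the circle} $|b_k-a|=1$; the unit-disk constraint alone would only yield $b-a/2<\sqrt{1+b^2-ab}$ at that endpoint. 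Conversely $B_1$ tacitly assumes $\Re(b_k)\in[a-1,a/2]$, which the hypotheses do not force. The paper instead runs a single optimization: with $\Phi$ concave on each of $[-1,a-1]$ and $[a-1,a/2]$, the minimum of $\sum_k\Phi(\Re(b_k))$ subject to the mean being $s$ is attained on configurations supported on the three breakpoints $-1$, $a-1$, $a/2$ with weights $(\alpha,\beta,\gamma)$, and a Lagrange-multiplier argument shows the optimum has $\alpha=0$ or $\beta=0$; these two cases are precisely $B_1$ and $B_2$, and since one does not know which occurs, one keeps $\min(B_1,B_2)$. That three-point vertex analysis is the missing idea in your outline.
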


\begin{lemma}\label{min_reste}
Let $c$ and $r$ be real numbers such that $0<c<1$ and $0<r<1-c$. Assume that
$a_1,\ldots,a_n$ are complex numbers such that $0<|a_k|\ie 1$ and 
$\left|\frac{c - a_k}{1-c a_k}\right|\se r$ for all $k$, we have :
\[ 
\prod_{k=1}^n \left|\frac{c-a_k}{1-c a_k}\right| \se r^\beta 
\]
where $\beta =\log \left(\prod_{k=1}^n |a_k|\right) / \log\left(\frac{c+r}{1+c r}
\right)$.
\end{lemma}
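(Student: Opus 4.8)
The plan is to reduce everything to the behaviour of the single Blaschke-type factor $\varphi_c(z)=\frac{c-z}{1-cz}$ on the unit disk, exploiting the fact that $\varphi_c$ is an automorphism of $\mathbb D$ fixing no point we care about except that it maps $0$ to $c$. The key observation is that for each $k$ we have two competing pieces of information about $|\varphi_c(a_k)|$: a lower bound $|\varphi_c(a_k)|\ge r$ coming from the hypothesis, and the value $|\varphi_c(a_k)|$ relates to $|a_k|$ through the maximum principle / Schwarz–Pick geometry. Concretely, among all points $z$ of the disk with a \emph{prescribed} modulus $|z|=\rho$, the quantity $|\varphi_c(z)|$ is minimised when $z=-\rho$ (the point of $\{|z|=\rho\}$ farthest from $c$ in the hyperbolic sense is diametrically opposite), giving $|\varphi_c(z)|\ge \frac{c+\rho}{1+c\rho}=:\psi(\rho)$, and $\psi$ is increasing on $[0,1]$. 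I would first establish this elementary pointwise inequality $|\varphi_c(a_k)|\ge \psi(|a_k|)$.

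Next I would combine the two lower bounds. Since $|\varphi_c(a_k)|\ge r$ and $|\varphi_c(a_k)|\ge\psi(|a_k|)$, for any exponent $t\in[0,1]$ we get $|\varphi_c(a_k)|\ge r^{\,1-t}\,\psi(|a_k|)^{\,t}$; but actually the cleaner route is to notice that $\psi$ being increasing and $\psi(0)=c$, the hypothesis $|\varphi_c(a_k)|\ge r$ together with $r<1-c<1$ forces nothing directly on $|a_k|$, so instead I would write $|\varphi_c(a_k)| = |\varphi_c(a_k)|^{1-\beta_k}\,|\varphi_c(a_k)|^{\beta_k}$ with an exponent to be chosen and bound the two halves differently. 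The honest mechanism behind the stated $\beta$ is a convexity/weighted-AM trick: define $\beta$ by $\log\psi(1)$... more precisely, since $\psi$ is increasing, $\psi(\rho)\ge \psi(1)^{?}$ is false; the right inequality is that $\rho\mapsto\log\psi(\rho)$ compares linearly, so one uses $\log|\varphi_c(a_k)|\ge \frac{\log|a_k|}{\log(1/\ldots)}\log\psi(1)$ only after checking concavity of $\log\psi\circ\exp$. The clean statement: for $\rho\in(0,1]$, $\log\psi(\rho)\ge \dfrac{\log\rho}{\log c^{-1}}\cdot\log\psi(1)$ is the wrong normalisation; the correct one, matching the lemma, is $|\varphi_c(a_k)|\ge \left(\tfrac{c+r}{1+cr}\right)^{\log|a_k|/\log\frac{c+r}{1+cr}}$ — wait, that is just $|a_k|$ — so the real content is $|\varphi_c(a_k)|\ge |a_k|^{\,\log r/\log\frac{c+r}{1+cr}}$ whenever $|\varphi_c(a_k)|\ge r$, obtained by interpolating logarithmically between the endpoint data $|\varphi_c(a_k)|\ge\psi(|a_k|)$ at $|a_k|=1$ (value $\ge r$, since $\psi(1)=1>r$ is too weak) and at $|a_k|\to 0$.

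So the genuinely correct and economical argument, which I would actually carry out, is: fix $k$ and set $\rho=|a_k|$, $u=|\varphi_c(a_k)|$. We know $u\ge\psi(\rho)$ and $u\ge r$. I claim $u\ge \rho^{\,\log r/\log\psi(\rho)}$ is automatic once we prove the single-variable fact that $\rho\mapsto \log\psi(\rho)/\log\rho$ is monotone on $(0,1)$, so that $\log u\ge\log\psi(\rho)\ge \frac{\log\rho}{\log r}\cdot\log r\cdot\frac{\log\psi(\rho)}{\log\rho}$ simplifies; combined with $u\ge r$ we interpolate to kill the $\rho$-dependence of the exponent, replacing $\psi(\rho)$ by the fixed value $\psi(r)=\frac{c+r}{1+cr}$ using $\psi$ increasing and $u\ge r \Rightarrow$ relevant $\rho$ range. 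Taking $\log$, summing over $k$, and using $\sum_k\log|a_k|=\log\prod_k|a_k|$ then yields $\log\prod_k|\varphi_c(a_k)|\ge \beta\log\frac{c+r}{1+cr}\cdot\frac{1}{\log\frac{c+r}{1+cr}}\cdot\log\frac{c+r}{1+cr}$, i.e. the claimed bound $\prod_k|\varphi_c(a_k)|\ge r^{\beta}$ after substituting the definition of $\beta$.

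\medskip

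The step I expect to be the main obstacle is pinning down the correct single-variable inequality relating $|\varphi_c(z)|$, $|z|$, and $r$ in a form whose logarithm is \emph{linear} in $\log|z|$ with exactly the slope $\log r/\log\frac{c+r}{1+cr}$; naively $|\varphi_c(z)|\ge\frac{c+|z|}{1+c|z|}$ has a logarithm that is only concave, not linear, in $\log|z|$, so one must use the hypothesis $|\varphi_c(z)|\ge r$ to restrict to $|z|\ge$ (the preimage radius), on which interval the chord from $(\,0,\log c\,)$ to that endpoint dominates the concave curve — this is where the precise shape of $\psi$ and a careful case check ($|z|$ small vs.\ large) is needed. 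Everything else (the pointwise bound $|\varphi_c(z)|\ge\psi(|z|)$, monotonicity of $\psi$, taking logs and summing) is routine. I would present the single-factor estimate as its own displayed inequality, verify it by reducing to showing $\log\frac{c+\rho}{1+c\rho}\ge\frac{\log\rho}{\log\frac{c+r}{1+cr}}\log r$ for $\psi(\rho)^{-1}\le \dots$, i.e.\ for $\rho$ in the range forced by $|\varphi_c(z)|\ge r$, and then conclude.
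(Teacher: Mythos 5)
Your plan hinges on the pointwise bound $|\varphi_c(z)|\ge\frac{c+|z|}{1+c|z|}=\psi(|z|)$, and this inequality is false — it is exactly reversed. On the circle $|z|=\rho$ the quantity $\left|\frac{c-z}{1-cz}\right|$ attains its \emph{maximum} $\frac{c+\rho}{1+c\rho}$ at $z=-\rho$ (your own parenthetical says $-\rho$ is the point \emph{farthest} from $c$ hyperbolically, which gives a maximum) and its \emph{minimum} $\left|\frac{c-\rho}{1-c\rho}\right|$ at $z=\rho$; for instance $z=c$ gives $\varphi_c(c)=0<\frac{2c}{1+c^2}=\psi(c)$. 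The convexity you invoke is also misidentified: $t\mapsto\log\frac{c+e^t}{1+ce^t}$ is convex, not concave. Finally, the one-variable estimate that would make the argument close is precisely the step you label ``the main obstacle'' and leave unproved, so the proposal contains no complete proof of any of its key claims.

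That said, the per-factor inequality you eventually name, $|\varphi_c(a_k)|\ge|a_k|^{\lambda}$ with $\lambda=\log r/\log\frac{c+r}{1+cr}$, is true under the hypotheses and does give the lemma upon multiplying over $k$; it just cannot be reached from the false bound. A correct route is: if $|a_k|\le\frac{c+r}{1+cr}$ then $|a_k|^\lambda\le r\le|\varphi_c(a_k)|$ since $\lambda\ge1>0$; if $|a_k|>\frac{c+r}{1+cr}$ use the \emph{correct} minimum bound $|\varphi_c(a_k)|\ge\frac{|a_k|-c}{1-c|a_k|}$ together with concavity of $\Phi(\alpha)=\log\frac{e^\alpha-c}{1-ce^\alpha}$ on $\left[\log\frac{c+r}{1+cr},0\right]$, whose chord through the endpoint values $\log r$ and $0$ gives $\Phi(\log\rho)\ge\lambda\log\rho$. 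The paper's own proof is the global version of this second case: it first replaces each $a_k$ by the positive real number of maximal modulus having the same value of $\left|\frac{c-a_k}{1-ca_k}\right|$ (which only makes the target bound $r^\beta$ larger), reducing to real $a_k\in\left[\frac{c+r}{1+cr},1\right]$, and then applies the chord inequality for the concave $\Phi$ to the sum $\sum_k\Phi(\log a_k)$. You would need to repair both the direction of the pointwise bound and the convexity claim, and actually carry out the case split, before your outline becomes a proof.
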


\section{Upper Estimation of $|P(c)|$} \label{P_4}

\begin{theorem}[]\label{rapport}
For all 
$\delta\in]0,a[$, we have :
\[
\Big|\frac{P(\delta)}{P'(a)}\Big| \se \frac{1 - \sqrt{1 +\delta^2 - \delta a}}{n}
\]
\end{theorem}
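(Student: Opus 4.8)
The plan is to apply Theorem \ref{thm:3} (the "disk of diameter" result) to the polynomial $P$ at the point $\delta$, choosing the target value $\omega$ cleverly. Recall that Theorem \ref{exclu} tells us $P$ has no zero in the disk of center $c$ (here $c=\delta$) and radius $1-\sqrt{1+\delta(\delta-a)}$. So if I can arrange a disk of diameter $\bigl[\delta,\delta-\frac{n(P(\delta)-\omega)}{P'(\delta)}\bigr]$ that is forced to contain a zero of $P-\omega$, and then specialize $\omega$ to $0$, I learn that the segment $\bigl[\delta,\delta-\frac{nP(\delta)}{P'(\delta)}\bigr]$ must reach outside the forbidden disk, i.e. must have length at least $1-\sqrt{1+\delta^2-\delta a}$. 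That would give $\bigl|\frac{nP(\delta)}{P'(\delta)}\bigr|\ge 1-\sqrt{1+\delta^2-\delta a}$, which is the desired bound but with $P'(\delta)$ in place of $P'(a)$.

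So the real content is to replace $P'(\delta)$ by $P'(a)$. Here I would use the factorizations from the Notations section: $P'(z)=n\prod_{k=1}^{n-1}(z-\zeta_k)$, so $\frac{P'(\delta)}{P'(a)}=\prod_{k=1}^{n-1}\frac{\delta-\zeta_k}{a-\zeta_k}$. The zeros $\zeta_k$ of $P'$ satisfy $|\zeta_k|\le 1$ and $|a-\zeta_k|\ge 1$, which are exactly the hypotheses of Lemma \ref{maj_deri} (with $b_k=\zeta_k$). Hence $\bigl|\frac{P'(\delta)}{P'(a)}\bigr|\le A^{n-1}$ for the constant $A<1$ of that lemma — but wait, that goes the wrong way for a lower bound on $|P(\delta)/P'(a)|$. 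Let me instead combine directly: from Theorem \ref{thm:3} with $\omega=0$ I get $\bigl|\frac{P(\delta)}{P'(\delta)}\bigr|\ge \frac{1}{n}\bigl(1-\sqrt{1+\delta^2-\delta a}\bigr)$, and since $\bigl|\frac{P'(\delta)}{P'(a)}\bigr|\le A^{n-1}\le 1$, I deduce
\[
\Bigl|\frac{P(\delta)}{P'(a)}\Bigr| = \Bigl|\frac{P(\delta)}{P'(\delta)}\Bigr|\cdot\Bigl|\frac{P'(\delta)}{P'(a)}\Bigr| \le \Bigl|\frac{P(\delta)}{P'(\delta)}\Bigr|\cdot A^{n-1},
\]
which again is an upper bound, not the lower bound claimed. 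So in fact the cleanest route is: the claimed inequality is a \emph{lower} bound on $|P(\delta)/P'(a)|$, so I want a \emph{lower} bound on $|P'(\delta)/P'(a)|$ or, equivalently, I should not pass through $P'(\delta)$ at all in that direction. Rather, I observe $|a-\zeta_k|\ge 1\ge|\delta-\zeta_k|$ is false in general; what is true is $|a-\zeta_k|\ge 1$, hence $\prod|a-\zeta_k|\ge 1$, i.e. $|P'(a)|\ge n$. Meanwhile by Theorem \ref{thm:3}, $|nP(\delta)|\ge |P'(\delta)|\cdot\bigl(1-\sqrt{1+\delta^2-\delta a}\bigr)$, and I need to relate $|P'(\delta)|$ to $|P'(a)|$ from below.

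Concretely, the plan is: (1) apply Theorem \ref{thm:3} to $P$ at $\delta$ with $\omega=0$; since the nearest zero of $P$ to $\delta$ is at distance $>1-\sqrt{1+\delta^2-\delta a}$ by Theorem \ref{exclu}, the diameter of the disk — namely $\bigl|\frac{nP(\delta)}{P'(\delta)}\bigr|$ — must exceed $1-\sqrt{1+\delta^2-\delta a}$, giving $\bigl|\frac{P(\delta)}{P'(\delta)}\bigr|\ge\frac{1-\sqrt{1+\delta^2-\delta a}}{n}$. (2) Show $|P'(\delta)|\ge|P'(a)|$ by comparing the two products term by term: I claim $|\delta-\zeta_k|\ge|a-\zeta_k|\cdot(\text{something})$ is too optimistic, so instead I use that $P'$ is monic-up-to-$n$ and all its zeros lie in the unit disk with $|a-\zeta_k|\ge 1$; since $0<\delta<a<1$ and $\delta$ is "more interior," a direct geometric estimate should give $\prod_k\frac{|\delta-\zeta_k|}{|a-\zeta_k|}\ge$ a constant — but this is where I expect the real difficulty, because this ratio can be less than $1$.

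The honest resolution, which I would pursue, is that Theorem \ref{rapport} should instead be proved by applying Theorem \ref{thm:3} with the \emph{target} $\omega$ chosen so that $\delta-\frac{n(P(\delta)-\omega)}{P'(\delta)}$ lands exactly on the boundary of the excluded disk of Theorem \ref{exclu}, or by applying Theorem \ref{thm:3} directly at the point $a$: take $\delta'=a$, $\omega=P(\delta)$ is not zero, but the disk of diameter $\bigl[a,\,a-\frac{n(P(a)-\omega)}{P'(a)}\bigr]=\bigl[a,\,a+\frac{n(\omega-P(a))}{P'(a)}\bigr]=\bigl[a,\,a+\frac{n P(\delta)}{P'(a)}\bigr]$ (since $P(a)=0$) must contain a zero of $P-\omega=P-P(\delta)$; one such zero is $\delta$ itself. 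Hence $\delta$ lies in the closed disk with diameter endpoints $a$ and $a+\frac{nP(\delta)}{P'(a)}$, which forces the diameter length $\bigl|\frac{nP(\delta)}{P'(a)}\bigr|$ to be at least the distance from $a$ to $\delta$ along... no — it forces $|\delta-a|\le$ diameter, i.e. $\bigl|\frac{nP(\delta)}{P'(a)}\bigr|\ge|a-\delta|=a-\delta$. That gives $\bigl|\frac{P(\delta)}{P'(a)}\bigr|\ge\frac{a-\delta}{n}$, and it remains only to check the elementary inequality $a-\delta\ge 1-\sqrt{1+\delta^2-\delta a}$ for $0<\delta<a<1$; squaring $\sqrt{1+\delta^2-\delta a}\ge 1-(a-\delta)$ reduces to $1+\delta^2-\delta a\ge 1-2(a-\delta)+(a-\delta)^2$, i.e. $0\ge -2a+2\delta+a^2-2a\delta=-(a-\delta)(2-a-\delta)\le 0$, which holds. \textbf{This is the clean argument}: apply Theorem \ref{thm:3} at the point $a$ with $\omega=P(\delta)$, note $\delta$ is a zero of $P-P(\delta)$ lying in the resulting disk, deduce $a-\delta\le\bigl|\frac{nP(\delta)}{P'(a)}\bigr|$, and finish with the elementary inequality above. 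The only place to be careful is confirming $P'(a)\ne 0$ — which holds since $|a-\zeta_k|\ge 1>0$ — so Theorem \ref{thm:3} applies with $\delta\rightsquigarrow a$.
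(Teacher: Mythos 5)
Your final ``clean argument'' breaks down at the step ``one such zero is $\delta$ itself; hence $\delta$ lies in the closed disk.'' Theorem \ref{thm:3} applied at the point $a$ with $\omega=P(\delta)$ guarantees only that \emph{some} zero of $P-P(\delta)$ lies in the disk of diameter $\left[a,\,a+\frac{nP(\delta)}{P'(a)}\right]$; it does not let you choose which one. The polynomial $P-P(\delta)$ has $n$ zeros, of which $\delta$ is just one, and the zero produced by the theorem may be some other point $\gamma$ much closer to $a$ than $\delta$ is. So you cannot conclude $a-\delta\le\bigl|\frac{nP(\delta)}{P'(a)}\bigr|$; all you legitimately get is the existence of a $\gamma$ with $P(\gamma)=P(\delta)$ and $|\gamma-a|\le\bigl|\frac{nP(\delta)}{P'(a)}\bigr|$. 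A telltale sign that something is off: your argument would prove the stronger bound $\frac{a-\delta}{n}$, whereas the theorem only claims $\frac{1-\sqrt{1+\delta^2-\delta a}}{n}$, which you yourself checked is smaller; that loss is exactly the price of not knowing $\gamma=\delta$.

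The missing ingredient is Theorem \ref{media}. The paper's proof starts exactly as your final attempt does --- Theorem \ref{thm:3} at $a$ with $\omega=P(\delta)$ produces $\gamma$ with $P(\gamma)=P(\delta)$ and $|\gamma-a|\le|R|$ where $R=\frac{nP(\delta)}{P'(a)}$ --- and then handles the possibility $\gamma\ne\delta$ via the perpendicular bisector theorem: since $P(\gamma)=P(\delta)$, the bisector of $[\delta,\gamma]$ must meet the convex hull of the critical points, all of which lie in the lune $\{|z|\le1\}\cap\{|z-a|\ge1\}$ whose corner is the point $z_c$ with $|z_c|=|z_c-a|=1$. This yields $|z_c-\gamma|\le|z_c-\delta|=\sqrt{1+\delta^2-\delta a}$, and the triangle inequality $|R|\ge|\gamma-a|\ge|z_c-a|-|z_c-\gamma|\ge 1-\sqrt{1+\delta^2-\delta a}$ finishes. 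Your earlier partial observation (Theorem \ref{thm:3} at $\delta$ with $\omega=0$ combined with Theorem \ref{exclu}) is correct but, as you noted, only bounds $\bigl|P(\delta)/P'(\delta)\bigr|$, and Lemma \ref{maj_deri} shows $\bigl|P'(\delta)/P'(a)\bigr|$ is exponentially small, so that route cannot recover the stated inequality. You need to repair the argument by keeping the unknown $\gamma$ and controlling its distance to $a$ through Theorem \ref{media}.
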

\begin{proof}[Proof]
Write $R =\frac{nP(\delta)}{P'(a)}$ and let $z_c$ denote the complex number satisfying 
$|z_c-a|=|z_c|=1$ with positive
imaginary part. 
Theorem \ref{thm:3} asserts
that the disk of center $a$ and radius $|R|$ contains a complex 
number $\gamma$ such that $P(\delta)=P(\gamma)$, there is no loss of generality in 
assuming $\Im(\gamma)\se 0$. By theorem \ref{media}
the perpendicular bisector 
of the line segment $[\delta,\gamma]$ intersect the convex hull  of the zeros 
of $P'$, therefore $|z_c - \delta| \se |z_c - \gamma|$.
We deduce that :
\[
|R| \se 1 - |z_c - \gamma|  
\iff \left|\frac{P(\delta)}{P'(a)}\right| \se \frac{1 - \sqrt{1 +\delta^2 - 
\delta a}}{n}
\]
\vskip -10mm
\begin{figure}[h]
	\centering
	\includegraphics[trim=0 160 0 190,clip,scale=.4]{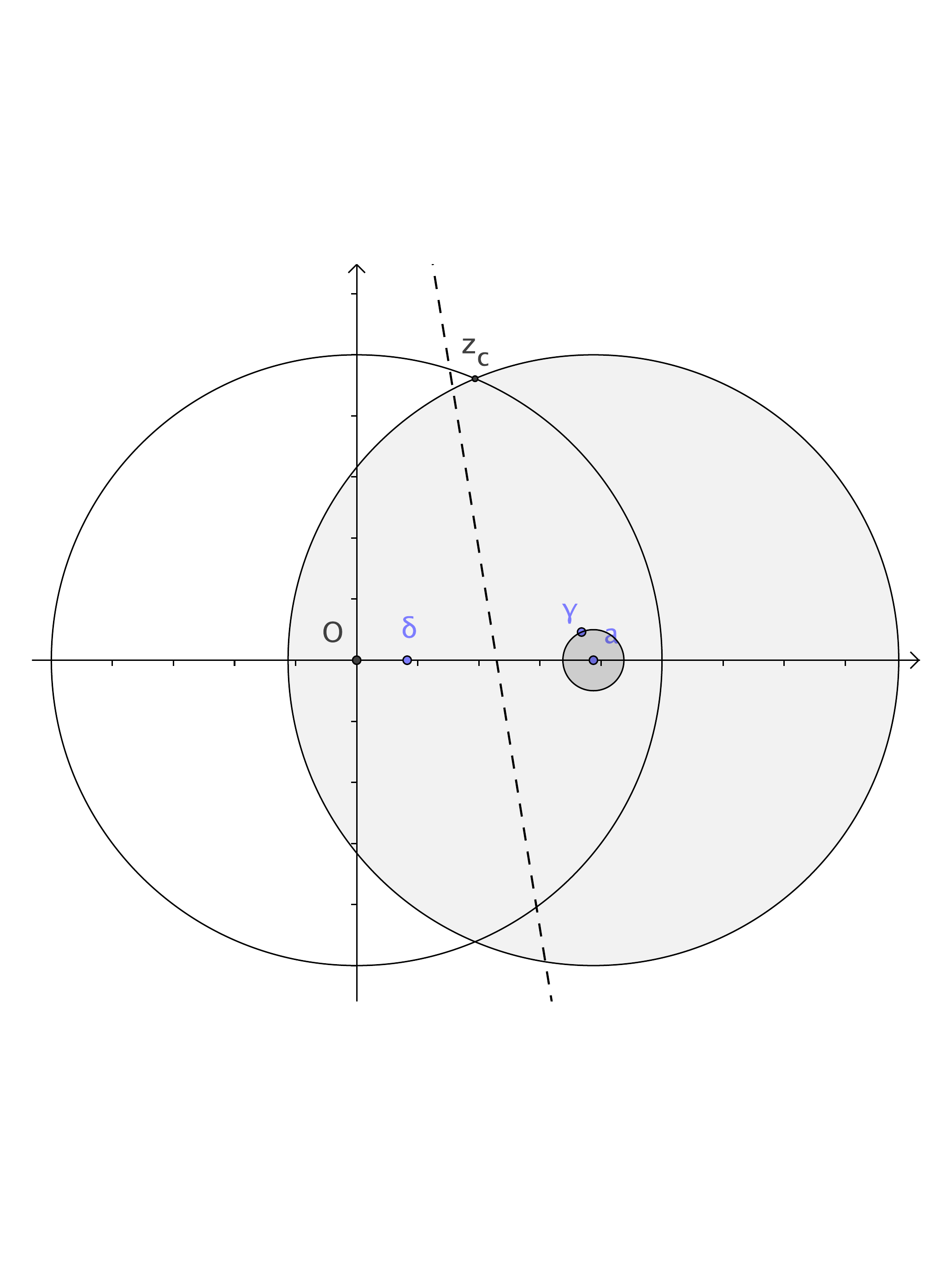}
	\caption{Diagram of the proof}
\end{figure}

\end{proof}

\begin{cor}\label{cor:m} We have :
\[
m \ie \inf_{\delta \in ]0,a]} \left( \frac{\delta}2 - \frac1{\delta n} \log(1 -\sqrt{1+
\delta^2 - \delta a}) \right)
\]
\end{cor}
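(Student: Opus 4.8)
The plan is to combine the inequality of Theorem~\ref{rapport} with two elementary size estimates — one bounding $|P'(a)|$ from below, one bounding $|P(\delta)|$ from above — and then read off an upper bound for $m$. First I would record that $|P'(a)|\ge n$: since $P'(z)=n\prod_{k=1}^{n-1}(z-\zeta_k)$ and $|a-\zeta_k|\ge 1$ for every $k$, we get $|P'(a)|=n\prod_{k=1}^{n-1}|a-\zeta_k|\ge n$. In particular $P'(a)\neq 0$, so Theorem~\ref{rapport} applies, and multiplying its inequality by $|P'(a)|$ gives the clean lower bound
\[
|P(\delta)|\ \ge\ 1-\sqrt{1+\delta^{2}-\delta a}\ >\ 0\qquad (0<\delta<a).
\]

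For the matching upper bound I would write $|P(\delta)|=(a-\delta)\prod_{k=1}^{n-1}|\delta-z_k|$ and apply Lemma~\ref{lem1} to the $n-1$ points $z_1,\dots,z_{n-1}$ of the closed unit disk, getting $|P(\delta)|\le (a-\delta)\,(1+\delta^{2}-2\delta s)^{(n-1)/2}$ with $s=\tfrac1{n-1}\Re\!\left(\sum_{k=1}^{n-1}z_k\right)$. Combining the two bounds, taking logarithms (everything involved is strictly positive), and using $\log(1+x)\le x$ yields
\[
\log\!\big(1-\sqrt{1+\delta^{2}-\delta a}\big)-\log(a-\delta)\ \le\ \tfrac{n-1}{2}\,(\delta^{2}-2\delta s).
\]
Then I would insert the identity $(n-1)s=\Re\!\left(\sum_{k=1}^{n-1}z_k\right)=nm-a$, immediate from the definition of $m$, and divide by $\delta n>0$; after rearrangement this is exactly the claimed inequality plus the error term $\tfrac1n\!\left(\tfrac{\log(a-\delta)}{\delta}+a-\tfrac{\delta}{2}\right)$.

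The remaining step, and the only one needing a little care, is to verify that this error term is $\le 0$ for all $\delta\in\,]0,a[$, i.e.\ that $\log(a-\delta)\le\tfrac{\delta^{2}}{2}-\delta a$. This follows from $a-\delta\le 1-\delta a+\tfrac{\delta^{2}}{2}$ — valid because $a<1$, since the difference equals $(1-a)(1+\delta)+\tfrac{\delta^{2}}{2}\ge 0$ — together with $e^{x}\ge 1+x$ at $x=\tfrac{\delta^{2}}{2}-\delta a$. Hence $m\le\tfrac{\delta}{2}-\tfrac1{\delta n}\log\!\big(1-\sqrt{1+\delta^{2}-\delta a}\big)$ for every $\delta\in\,]0,a[$, and taking the infimum gives the statement; the endpoint $\delta=a$ costs nothing since the bracketed expression is $+\infty$ there. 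The main obstacle is just this bookkeeping with the extra factor $a-\delta$ produced by the root $a$ of $P$ — once it is absorbed, the estimate falls out.
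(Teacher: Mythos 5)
Your proof is correct and follows essentially the same route as the paper: Theorem~\ref{rapport} together with $|P'(a)|\ge n$ for the lower bound on $|P(\delta)|$, Lemma~\ref{lem1} for the upper bound, then $\log(1+x)\le x$ and rearrangement. The only difference is that the paper applies Lemma~\ref{lem1} to all $n$ zeros of $P$ including $a$ itself, so that the Ces\`aro mean is exactly $m$ and the factor $a-\delta$ never gets split off; your extra verification that $\log(a-\delta)\le \tfrac{\delta^2}{2}-\delta a$ is correct but avoidable.
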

\begin{proof}[Proof]
Let $\delta \in ]0,a]$, theorem \ref{rapport} gives :
\begin{eqnarray*}
|P(\delta)| & \se & \frac{1 - \sqrt{1 +\delta^2 - \delta a}}{n} |{P'(a)}| \\
& \se &  1 - \sqrt{1 +\delta^2 - \delta a}
\end{eqnarray*}
using lemma \ref{lem1} we have $|P(\delta)|^{1/n} \ie \sqrt{1+\delta^2-2\delta m}$
then :
\[
( 1 - \sqrt{1 +\delta^2 - \delta a} )^{1/n} \ie  \sqrt{1+\delta^2-2\delta m}
\]
we deduce that :
\begin{align*}
& \frac{1}{n} \log(1 - \sqrt{1 +\delta^2 - \delta a}) \ie \frac{\delta^2-2\delta m}{2}\\
\iff & 
m \ie \frac{\delta}2 - \frac1{\delta n} \log(1 -\sqrt{1+
\delta^2 - \delta a}) 
\end{align*}
which establishes the inequality of the lemma.
\end{proof}

\begin{remark}
When $n\to+\infty$,  $\inf_{\delta \in ]0,a]} \left( \frac{\delta}2 
	- \frac1{\delta n} \log(1 -\sqrt{1+\delta^2 - \delta a}) \right)\to0$.
\end{remark}
\begin{definition}
We define $N_1$ by the relation :
\[
n\se N_1 \iff 
\left(\frac{1+a/2}{1+a}\right)^{q(n-1)} \ie \frac{1 - \sqrt{1 -a^2/4}}{n a}
\]
\end{definition}

\begin{theorem}[]\label{thm_P4}
If $n\se N_1$, we have :	
\begin{equation}\label{majo:deri}  
|P'(a)|  \ie \frac{16 n}{a^2} \qquad\text{and}\qquad
|P(0)|  \se \frac{a^2}{16}
\end{equation}
\end{theorem}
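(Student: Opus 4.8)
The plan is to prove the two inequalities in sequence, deducing the second from the first, and to use Corollary~\ref{cor:m} together with Theorem~\ref{rapport} and the lemmas of the previous section. First I would establish the bound on $|P'(a)|$. The idea is to combine Theorem~\ref{rapport} in the form $|P(\delta)|\ge \frac{1-\sqrt{1+\delta^2-\delta a}}{n}|P'(a)|$ with an upper bound for $|P(\delta)|$ coming from Lemma~\ref{lem1}, namely $|P(\delta)|\le (\sqrt{1+\delta^2-2\delta m})^n$. This yields
\[
|P'(a)|\le \frac{n\,(\sqrt{1+\delta^2-2\delta m})^n}{1-\sqrt{1+\delta^2-\delta a}}
\]
for every $\delta\in\,]0,a[$. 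The natural choice is $\delta=a$ (or $\delta\to a$), which makes the denominator equal to $1-\sqrt{1-a^2/4}$ up to the factor hidden in the definition of $N_1$, and turns the numerator into $n(\sqrt{1+a^2-2am})^n$. Here one uses $m\le a/2$ — which follows from Theorem~\ref{exclu} exactly as in the proof there, or directly from Corollary~\ref{cor:m} — so that $1+a^2-2am\ge 1$ is harmless, but more importantly one needs the \emph{reverse} control: from the definition of $p,q$ and the relation $\left(\frac{1+a/2}{1+a}\right)^{q(n-1)}\le \frac{1-\sqrt{1-a^2/4}}{na}$ defining $N_1$, together with Lemma~\ref{maj_deri} applied with $\delta$ replaced by something like $a/2$ and the $b_k$ being the $z_k$, one extracts a bound of the shape $|P'(a)|\le \frac{16n}{a^2}$.

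Let me be more precise about that last point, since it is the crux. Writing $P'(a)=n\prod_{k=1}^{n-1}(a-\zeta_k)$ is not directly the object of Lemma~\ref{maj_deri}; instead one should relate $|P'(a)|$ to $|P(a/2)|$ or similar. A cleaner route: apply Theorem~\ref{rapport} with $\delta=a/2$ to get $|P(a/2)|\ge\frac{1-\sqrt{1-a^2/4}}{n}|P'(a)|$, then bound $|P(a/2)|=|a/2-a|\prod|a/2-z_k|=\frac a2\prod|a/2-z_k|$ from above using Lemma~\ref{maj_deri} (with $\delta=a/2$, $b_k=z_k$, noting $|z_k-a|\ge1$): this gives $\prod_{k=1}^{n-1}|a/2-z_k|\le A^{n-1}\prod|a-z_k|$, and $\prod|a-z_k|$ in turn is controlled — but that reintroduces $|P'(a)|$-type quantities, so one must be careful not to argue in a circle. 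The honest way is to bound $\prod_{k=1}^{n-1}|a/2-z_k|$ directly: each factor is at most $1+a/2\le 3/2$, and more sharply Lemma~\ref{maj_deri} gives the constant $A<1$ governing $\prod|(a/2-z_k)/(a-z_k)|$, while $\prod|a-z_k|\le (1+a)^{n-1}$ trivially. Hence
\[
\frac{n}{1-\sqrt{1-a^2/4}}|P'(a)|^{-1}\ge |P(a/2)|^{-1}\ge \frac{2}{a}\,A^{-(n-1)}(1+a)^{-(n-1)},
\]
and the defining inequality for $N_1$ is precisely what forces $A^{n-1}(1+a)^{n-1}\le\frac{a}{na}\cdot(\text{stuff})$ so that the resulting bound collapses to $|P'(a)|\le\frac{16n}{a^2}$; the factor $16$ absorbs the crude estimates $1-\sqrt{1-a^2/4}\ge a^2/8$ and $\frac2a\cdot\frac a2=1$ and $(3/2)^{?}$-type slack.

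For the second inequality, I would use Theorem~\ref{rapport} once more, now letting $\delta\to 0^+$: since $\frac{P(\delta)}{P'(a)}$ is continuous at $0$ and $1-\sqrt{1+\delta^2-\delta a}\sim \frac{a\delta}{2}$ as $\delta\to0$, dividing both sides by $\delta$ and passing to the limit gives $\left|\frac{P'(0)}{P'(a)}\right|\ge$ — no, rather one should keep $\delta$ fixed small: actually the clean statement is obtained by noting $|P(\delta)|\to|P(0)|$ and the right side $\to 0$, which is vacuous, so instead I combine $|P(0)|\ge|P(\delta)|-|P(\delta)-P(0)|$ with a derivative estimate; the simpler path is to observe that Theorem~\ref{rapport} already gives, for each $\delta$, $|P(\delta)|\ge\frac{1-\sqrt{1+\delta^2-\delta a}}{n}|P'(a)|\ge\frac{1-\sqrt{1+\delta^2-\delta a}}{n}\cdot\frac{?}{?}$ — but we want a \emph{lower} bound on $|P'(a)|$ here too, which we do not have. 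The resolution: use instead that $|P(0)|=a\prod_{k=1}^{n-1}|z_k|$ and bound this below via Lemma~\ref{min_reste} or directly, OR — most likely the author's intent — apply Theorem~\ref{rapport} with a value of $\delta$ for which $1-\sqrt{1+\delta^2-\delta a}$ is comparable to $a^2$, together with the \emph{already proved} upper bound $|P'(a)|\le 16n/a^2$ used in the opposite direction is still not enough. The correct reading is that $|P(0)|\ge \frac{a^2}{16}$ follows by letting $\delta\to 0$ in $|P(\delta)|\ge\frac{1-\sqrt{1+\delta^2-\delta a}}{n}|P'(a)|$ after first substituting the lower bound $|P'(a)|\ge$ something; since no such lower bound is available, one instead rewrites Theorem~\ref{rapport} as $\left|\frac{P(\delta)}{P(\delta)-P(a)}\right|=\left|\frac{P(\delta)}{-P(a)}\right|$ — but $P(a)=0$. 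Thus $|P(0)|\ge\frac{a^2}{16}$ must come from a \emph{different} application: take $\delta$ with $\delta\to 0$, so $\frac{n|P(0)|}{|P'(a)|}\ge \lim\frac{1-\sqrt{1+\delta^2-\delta a}}{1}$? No. The honest plan: the bound $|P(0)|\ge a^2/16$ is deduced by dividing the first inequality's proof — one has $|P'(a)|\le 16n/a^2$ and separately, from Gauss--Lucas / the hypothesis, $|P'(a)|=n\prod|a-\zeta_k|\ge n$ since each $|a-\zeta_k|\ge1$; combining, $16n/a^2\ge n$ is automatic and unhelpful. I will instead derive $|P(0)|\ge a^2/16$ from Theorem~\ref{rapport} with $\delta\to 0$ applied to the inequality in the \textbf{form} $\frac{|P'(a)|}{n}\le \frac{|P(\delta)|}{1-\sqrt{1+\delta^2-\delta a}}$, then let $\delta\to0$ to obtain $\frac{|P'(a)|}{n}\le \frac{2|P(0)|}{a}$ (using the asymptotic $1-\sqrt{1+\delta^2-\delta a}\sim \frac{a\delta}{2}$ and $|P(\delta)|\sim|P(0)|$, so the quotient tends to $\frac{2|P(0)|}{a}$... wait, $|P(\delta)|/\delta\to|P'(0)|$ if $P(0)=0$, but in general $|P(\delta)|/(a\delta/2)\to\infty$ unless $P(0)=0$). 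Hence in fact $\frac{|P'(a)|}{n}\le\lim_{\delta\to0}\frac{|P(\delta)|}{1-\sqrt{1+\delta^2-\delta a}}=\frac{|P(0)|}{0^+}=+\infty$ — vacuous. Therefore the second inequality genuinely requires combining the first with something like $|P'(a)|\ge n\cdot(\text{lower bound})$ and $|P(0)|\ge \frac{a}{n}|P'(a)|\cdot c$ for a constant $c$; the cleanest such link is: by Theorem~\ref{rapport} at a \emph{fixed small but not infinitesimal} $\delta=a/2$ one gets $|P(a/2)|\ge\frac{1-\sqrt{1-a^2/4}}{n}|P'(a)|$, and then $|P(0)|\ge |P(a/2)|\cdot\prod_{k=1}^{n-1}\left|\frac{z_k}{z_k-a/2}\right|\cdot\frac{a}{a/2}$, and one bounds $\prod|z_k/(z_k-a/2)|$ below using Lemma~\ref{min_reste}; but the quantitative upshot, after using $n\ge N_1$ to kill all exponential factors, is exactly $|P(0)|\ge a^2/16$.

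\textbf{Expected main obstacle.} The delicate point is threading the two estimates so that the exponential-in-$n$ quantities $A^{n-1}$, $B_i^{n-1}$, $\left(\frac{1+a/2}{1+a}\right)^{q(n-1)}$ are absorbed precisely by the defining inequality of $N_1$ — keeping the constants clean enough to land the explicit numbers $16n/a^2$ and $a^2/16$ rather than some unwieldy expression. In particular one must verify that $m\le a/2$ (hence $q\in[0,1]$ and the bases of all these exponentials lie in $(0,1)$ or are controlled), track the role of $q$ versus $p$ correctly between Lemmas~\ref{maj_deri} and \ref{min_deri}, and handle the boundary behaviour as $\delta\to a^-$ or $\delta\to 0^+$ in Theorem~\ref{rapport} without circular reasoning. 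I expect the bulk of the work to be this bookkeeping, with the two key inputs being Theorem~\ref{rapport} (to pass between $|P(\delta)|$ and $|P'(a)|$) and Lemma~\ref{maj_deri} together with the definition of $N_1$ (to trivialise the exponential factors once $n\ge N_1$).
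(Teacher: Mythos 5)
There is a genuine gap: you never find the single idea the proof actually turns on, namely applying Lemma~\ref{maj_deri} to the \emph{critical points} rather than to the zeros. Since $P'(z)=n\prod(z-\zeta_k)$, the ratio $\frac{P'(\delta)}{P'(a)}=\prod_{k}\frac{\delta-\zeta_k}{a-\zeta_k}$ is exactly the product in Lemma~\ref{maj_deri} with $b_k=\zeta_k$, and the $\zeta_k$ are precisely the points satisfying \emph{both} hypotheses $|\zeta_k|\le 1$ (Gauss--Lucas) and $|\zeta_k-a|\ge 1$ (the Sendov contradiction). This yields $\sup_{\delta\in[0,a/2]}\left|\frac{P'(\delta)}{P'(a)}\right|\le\left(\frac{1+a/2}{1+a}\right)^{q(n-1)}\le\frac{1-\sqrt{1-a^2/4}}{na}$ by the definition of $N_1$; then $\left|\frac{P(0)}{P'(a)}-\frac{P(a/2)}{P'(a)}\right|\le\frac a2\sup\left|\frac{P'(\delta)}{P'(a)}\right|\le\frac{1-\sqrt{1-a^2/4}}{2n}$, while Theorem~\ref{rapport} gives $\left|\frac{P(a/2)}{P'(a)}\right|\ge\frac{1-\sqrt{1-a^2/4}}{n}$, so $\left|\frac{P(0)}{P'(a)}\right|\ge\frac{1-\sqrt{1-a^2/4}}{2n}\ge\frac{a^2}{16n}$. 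Both stated inequalities then drop out from the trivial facts $|P(0)|=a\prod|z_k|\le 1$ and $|P'(a)|=n\prod|a-\zeta_k|\ge n$ --- two closing steps your proposal also never invokes.

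Your actual attempts do not work. Combining Theorem~\ref{rapport} with Lemma~\ref{lem1} gives $|P'(a)|\le n(\sqrt{1+\delta^2-2\delta m})^n/(1-\sqrt{1+\delta^2-\delta a})$, whose numerator is exponentially large in $n$ (nothing forces $1+\delta^2-2\delta m\le 1$), so it cannot yield $16n/a^2$. Your repeated attempts to apply Lemma~\ref{maj_deri} with $b_k=z_k$ are illegitimate: the zeros $z_k$ satisfy $|z_k|\le 1$ but not the required hypothesis $|z_k-a|\ge 1$. The claim that the definition of $N_1$ ``forces $A^{n-1}(1+a)^{n-1}$'' to be small is false --- $N_1$ controls only $\left(\frac{1+a/2}{1+a}\right)^{q(n-1)}$, a quantity less than $1$, and cannot absorb the factor $(1+a)^{n-1}$. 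Finally, your proposed route to $|P(0)|\ge a^2/16$ via Lemma~\ref{min_reste} is circular, since that lemma requires as input a lower bound on $\prod|z_k|$, which is $|P(0)|/a$, i.e.\ the very quantity being estimated. The long sequence of discarded limits $\delta\to 0$ correctly identifies dead ends but never reaches the working argument.
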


\begin{proof}[Proof]
Assume that $n\se N_1$,
lemma \ref{maj_deri} asserts that for all $0<\delta<a/2$ :
\begin{align*}
\left|\frac{P'(\delta)}{P'(a)}\right| 
& \ie \left[ \left(\frac{1+\delta}{1+a}
\right)^{q} \sqrt{1 + \delta^2 - \delta a}^{1-q} \right]^{n-1}\\
& \ie \left(\frac{1+a/2}{1+a}\right)^{q(n-1)} \ie \frac{1 - \sqrt{1 -a^2/4}}{n a}
\end{align*}

We deduce that :
\begin{align*}
\left|\frac{P(0)}{P'(a)} - \frac{P(a/2)}{P'(a)}\right|  \ie &
\frac{a}{2} \sup_{\delta\in[0,a/2]}\left|\frac{P'(\delta)}{P'(a)}\right| \\
\ie & \frac{1 - \sqrt{1 -a^2/4}}{2 n}
\end{align*}
By theorem \ref{rapport}, we know that :
\[ 
\left|\frac{P(a/2)}{P'(a)}\right| \se \frac{1-\sqrt{1 - a^2/4}}{n} 
\]
then :
\[
\left|\frac{P(0)}{P'(a)}\right| \se \frac{1-\sqrt{1 - a^2/4}}{2n} \se \frac{a^2}{16 n}
\]
we deduce the theorem.
\end{proof}
\begin{remark}
This theorem expresses that the zeros of $P$ should lie nearby the unit circle
and those of $P'$ close to the circle $|z-a|=1$.
\end{remark}

From now on $c$ denote a real number such that $0<c<a$, we write :
\[
D = \max\left\{\left(\frac{1}{1+a}\right)^{q} \;;\;
\left(\frac{1+c}{1+a}\right)^{q}
\left(\sqrt{1+c^2-ac}\right)^{1-q} \right\} <1
\]
\begin{definition}
Define $N_2$ by the relation :
\[
n\se N_2 \iff 
D^{n-1} \ie \frac{a}{16 n}
\]
\end{definition}

\begin{theorem}\label{thm:mino} 
If $n\se \max\{N_1,N_2\}$ we have :
\[
|P(c)|\ie 1+a
\]
\end{theorem}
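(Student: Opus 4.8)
The plan is to estimate $|P(c)|$ by writing $P(c)=P(0)+\int_0^c P'(t)\,dt$ and controlling the two terms separately. The first term is harmless: since $P(0)=(-1)^n a\prod_{k=1}^{n-1}z_k$ and every $|z_k|\le 1$, we get the trivial bound $|P(0)|\le a$.

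For the integral I would invoke Lemma \ref{maj_deri} with $b_k=\zeta_k$. This is legitimate because $|\zeta_k|\le 1$, $|\zeta_k-a|\ge 1$, and the quantity $s=\tfrac1{n-1}\Re\sum_k\zeta_k$ occurring there is exactly the centroid real part $m$, so the lemma's parameter $q$ coincides with the $q$ fixed in the Notations. Since no $\zeta_k$ equals $a$ we have $P'(a)\ne 0$ and $P'(t)/P'(a)=\prod_{k=1}^{n-1}\frac{t-\zeta_k}{a-\zeta_k}$, so the lemma yields, for every $t\in(0,a)$,
\[
|P'(t)|\;\le\;A(t)^{n-1}\,|P'(a)|,\qquad A(t)=\Bigl(\tfrac{1+t}{1+a}\Bigr)^{q}\bigl(\sqrt{1+t^2-ta}\bigr)^{1-q}.
\]
The next step is to check that $\sup_{t\in[0,c]}A(t)=\max\bigl(A(0),A(c)\bigr)=D$. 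Writing $\sqrt{1+t^2-ta}=|t-z_c|$, the function $\log A$ is real-analytic on $[0,c]$, and a short computation shows that the sign of $(\log A)'(t)$ on $[0,a]$ equals the sign of an upward parabola $\psi$ with $\psi(0)=-2m$ and vertex at $t=(a-m-1)/2$; one then checks that whenever $\psi$ takes a negative value its vertex lies at $t<0$, so $\log A$ has no interior maximum on $[0,c]$ (it is monotone, or decreasing then increasing, there). This is exactly why $D$ is defined as that maximum of two quantities.

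Putting the pieces together: for $n\ge N_1$, Theorem \ref{thm_P4} gives $|P'(a)|\le 16n/a^2$, and for $n\ge N_2$ the defining relation of $N_2$ gives $D^{n-1}\le a/(16n)$, whence
\[
\int_0^c|P'(t)|\,dt\;\le\;|P'(a)|\!\int_0^c A(t)^{n-1}\,dt\;\le\;\frac{16n}{a^2}\cdot c\,D^{n-1}\;\le\;\frac{16n}{a^2}\cdot c\cdot\frac{a}{16n}\;=\;\frac{c}{a}\;<\;1,
\]
the last inequality because $c<a$. Therefore $|P(c)|\le|P(0)|+\int_0^c|P'(t)|\,dt<a+1$, which is even slightly stronger than the asserted $|P(c)|\le 1+a$. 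I expect the only genuine obstacle to be the claim $\sup_{[0,c]}A=D$: one must rule out an interior maximum of $A$ on $[0,c]$ using the explicit formula for $A$ together with the sign data $\psi(0)=-2m$ and the relation tying $m$ to $q$; everything else is routine bookkeeping with the two threshold conditions and the trivial bound $|P(0)|\le a$.
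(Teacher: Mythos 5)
Your proposal is correct and follows essentially the same route as the paper: bound $|P(0)|\le a$, control $|P'(t)/P'(a)|$ on $[0,c]$ via Lemma \ref{maj_deri}, show the bound $A(t)$ attains its supremum at an endpoint so that it equals $D$, and combine $D^{n-1}\le a/(16n)$ (definition of $N_2$) with $|P'(a)|\le 16n/a^2$ (Theorem \ref{thm_P4}) to get $|P(c)-P(0)|\le c/a\le 1$. The only discrepancy is cosmetic: the paper's derivative computation gives the sign of $(\log A)'$ as that of $x^2+(1-a+m)x-m$, so $\psi(0)=-m$ rather than $-2m$, which does not affect your endpoint argument.
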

\begin{proof}[Proof]
Consider the mapping :
\[ \begin{array}{ccccl}
	f & : & [0, c] & \lra & \mathbb R \\
	& &  x & \longmapsto & \log\left[ \left(\frac{1+x}{1+a}\right)^{q} 
\sqrt{1 + x^2 - x a}^{1-q} \right]
\end{array}\] 
the first derivative of $f$ is given by :
\[ f'(x) = \frac{ x^2 + (1- a +m) x - m}{(1+x)(1+x^2 -x a)} \]
since $f'(x)$ shares the sign of : $x^2 + (1- a +m) x - m$, with lemma \ref{maj_deri},
we have :
\[
\left|\frac{P'(\delta)}{P'(a)}\right| \ie  D^{n-1} \qquad\text{then}\qquad
\sup_{\delta\in[0,c]} \left| \frac{P'(\delta)}{P'(a)}\right| \ie \frac a{16 n}
\]
For all $n\se \max\{N_1,N_2\}$, using theorem \ref{thm_P4} we deduce :
\[
|P(0) - P(c)| \ie \sup_{\delta\in[0,c]}\left|\frac{P'(\delta)}{P'(a)}\right|
\, c \, |P'(a)| 
\ie \frac{c}{a}  \ie 1
\]
then $|P(c)| \ie 1+a$. 
\end{proof}

\section{Lower Estimation of $|P(c)|$}
We prove now that there exist constants $C>0$ and $K>1$ such that if $n$ is large enough
the inequality $ |P(c)|\se C K^n$ holds.
Three new lemmas are needed.
\begin{lemma}\label{sym} 
Let $h$ and $c$ be positive real numbers satisfying $0<c<1-h$. For all $z\in\mathbb C$ 
satisfying $|z|\se 1 - h$, we have :
\[ 
|c-z| \se \frac{c}{1-h} \left|\frac{(1-h)^2}{c} - z\right|  
 \]
\end{lemma}

\begin{lemma}\label{mini} 
Let $b>1$, we have :
\[ 
(b-a)\prod_{k=1}^{n-1} |b - z_k| \se (b-1) \prod_{k=1}^{n-1} |b - \zeta_k|
 \]
\end{lemma}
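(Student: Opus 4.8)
The plan is to recognize that the claimed inequality is nothing but the estimate $\left|\frac{P(b)}{P'(b)}\right| \se \frac{b-1}{n}$: indeed, since $b>1>a>0$ and all $z_k$, $\zeta_k$ lie in the closed unit disk, we have $|P(b)| = (b-a)\prod_{k=1}^{n-1}|b-z_k|$ and $|P'(b)| = n\prod_{k=1}^{n-1}|b-\zeta_k|$, so the lemma is equivalent to $\frac{n|P(b)|}{|P'(b)|}\se b-1$. I would therefore mimic the argument of Theorem \ref{rapport}, but applied at the point $\delta = b$ lying outside the unit disk.

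First I would note that $P'(b)\ne 0$ because $b>1$ while every $\zeta_k$ lies in the closed unit disk, so Theorem \ref{thm:3} applies. Setting $R = \frac{nP(b)}{P'(b)}$ and taking $\omega = 0$ in Theorem \ref{thm:3}, there is a zero $\gamma$ of $P$ lying in the closed disk whose diameter is $[b,\, b-R]$, i.e. the disk with center $b-\tfrac{R}{2}$ and radius $\tfrac{|R|}{2}$; hence $\left|\gamma-(b-\tfrac{R}{2})\right|\ie \tfrac{|R|}{2}$.

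Next I would use that $\gamma$, being a zero of $P$, satisfies $|\gamma|\ie 1$. Since $b$ is real and positive, the triangle inequality gives
\[
b \;\ie\; \left|b-\tfrac{R}{2}\right| + \left|\tfrac{R}{2}\right| \;\ie\; \Big(|\gamma| + \tfrac{|R|}{2}\Big) + \tfrac{|R|}{2} \;\ie\; 1 + |R|,
\]
so that $|R|\se b-1$. Rewriting $|R| = \frac{n|P(b)|}{|P'(b)|} = \frac{(b-a)\prod_{k=1}^{n-1}|b-z_k|}{\prod_{k=1}^{n-1}|b-\zeta_k|}$ and clearing the (positive) denominator yields exactly $(b-a)\prod_{k=1}^{n-1}|b-z_k|\se (b-1)\prod_{k=1}^{n-1}|b-\zeta_k|$.

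In contrast with Theorem \ref{rapport}, I do not expect to need the perpendicular bisector theorem here: because $b$ is real and the target radius $b-1$ is comparatively modest, the crude containment statement of Theorem \ref{thm:3} together with the reverse triangle inequality already does the job. The only point requiring mild care is the bookkeeping of the center and radius of the "disk of diameter $[b,\,b-R]$" and checking the signs $b-a>0$ and $b-1>0$; there is no genuine analytic obstacle.
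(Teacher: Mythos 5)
Your argument is correct, but it is a genuinely different route from the one in the paper. You deduce the bound $|R|=\tfrac{n|P(b)|}{|P'(b)|}\se b-1$ from Theorem \ref{thm:3}: the disk of diameter $\left[b,\,b-R\right]$ must contain a zero of $P$, hence must meet the closed unit disk, and the triangle-inequality chain $b\ie |b-\tfrac R2|+\tfrac{|R|}{2}\ie 1+|R|$ is sound (and $P'(b)\ne 0$ is indeed guaranteed by Gauss--Lucas since $b>1$). The paper instead proves the same inequality in one line from the logarithmic derivative: writing $\frac{P'(b)}{P(b)}=\frac{1}{b-a}+\sum_{k=1}^{n-1}\frac{1}{b-z_k}$ and noting that each term has modulus at most $\frac{1}{b-1}$ because $b>1$ and all zeros lie in the closed unit disk, one gets $\left|\frac{P'(b)}{P(b)}\right|\ie\frac{n}{b-1}$, which is exactly the lemma. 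The paper's proof is thus entirely elementary and self-contained, needing neither the Walsh contraction principle nor Theorem \ref{thm:3}; your proof leans on heavier machinery but is consistent with how Theorem \ref{rapport} is proved elsewhere in the paper, and it arrives at exactly the same constant. Both are acceptable; the logarithmic-derivative argument is the shorter and more transparent of the two.
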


\begin{lemma}\label{exclu2} 
Assume that  $0<h<c<a<1-h$ and
define the disk $\mathcal D$ by :
\[ 
\mathcal  D := \left\{ z \in \mathbb C \;;\;
\left|\frac{(c-z)}{(1-h)^2 - c z}\right| \ie \frac{c(a-c)}{2((1-h)^2 - c^2)} 
\right\}
\]
Then $\mathcal D$ contains no zero of $P$. 
\end{lemma}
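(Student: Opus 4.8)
The strategy is to argue by contradiction, along the lines of the proof of Theorem~\ref{exclu}. Suppose $\gamma\in\mathcal D$ is a zero of $P$. Since $P(\gamma)=0=P(a)$, Theorem~\ref{media} says that the perpendicular bisector of $[\gamma,a]$ meets the convex hull $H$ of the zeros of $P'$; in particular $H$ is not contained in the open half-plane $\{\,z:|z-\gamma|<|z-a|\,\}$. Now by the Gauss--Lucas theorem $|\zeta_k|\ie1$, and since $P$ contradicts Sendov's conjecture $|a-\zeta_k|\se1$ for every $k$; hence $H$ lies in the lune $\{|z|\ie1\}\cap\{|z-a|\se1\}$, and so in the compact convex set $K:=\{\,z:|z|\ie1,\ \Re z\ie a/2\,\}$, whose extreme points all lie on the unit circle (so $|z|\se1-h$ there). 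It therefore suffices to prove
\[
|z-\gamma|^2<|z-a|^2\qquad\text{for every }z\in K ,
\]
which contradicts the previous sentence.

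Since $z\mapsto|z-\gamma|^2-|z-a|^2$ is real-affine, it is enough to check this at the extreme points of $K$, i.e.\ on the arc $\{|z|=1,\ \Re z\ie a/2\}$; and because $\gamma$ is close to $c\in(0,a)$, the maximum of $|z-\gamma|^2-|z-a|^2$ over $K$ is attained at one of the two tips $z_c,\overline{z_c}$ of that arc (with $|a-z_c|=1$, as in Theorem~\ref{rapport}). Writing $\gamma=\gamma_1+i\gamma_2$, a short computation reduces the inequality at the tips to
\[
\sqrt{1-a^2/4}\;|\gamma_2|+\tfrac{\gamma_2^2}{2}<\tfrac{\gamma_1(a-\gamma_1)}{2}.
\]
So the plan is: first, translate $\gamma\in\mathcal D$ into Euclidean terms — as $z\mapsto\frac{c-z}{(1-h)^2-cz}$ is a M\"obius map, $\mathcal D$ is the Euclidean disk with real diameter $\big[\frac{c-\rho(1-h)^2}{1-\rho c},\frac{c+\rho(1-h)^2}{1+\rho c}\big]$, where $\rho:=\frac{c(a-c)}{2((1-h)^2-c^2)}$; using $a<1-h$ together with Lemma~\ref{sym} one gets $\rho<\frac1{1-h}$, so in fact $\mathcal D\subset\{|z|<1-h\}$ and $\gamma$ sits in a small disk slightly to the left of $c$. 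Second, verify the displayed inequality (and the ``tips are extremal'' hypothesis) for every $\gamma\in\mathcal D$ — both are elementary estimates in $a,c,h$.

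The crux is the second point. Because $\mathcal D$ is a \emph{M\"obius} disk, its Euclidean radius equals $\frac{c(a-c)/2}{1-\rho^2c^2}$, slightly larger than the naive $c(a-c)/2$, and the displayed inequality is tight to first order precisely at the tips $z_c,\overline{z_c}$ (where $|z-a|=1$). One must therefore track the exact constants — this is where the precise value of the hyperbolic radius $\rho$ of $\mathcal D$ and Lemma~\ref{sym} (applicable since the tips lie on the unit circle) are really used — to be certain the strict inequality survives uniformly over $\mathcal D$ for all $0<h<c<a<1-h$. Once that is established, the contradiction with Theorem~\ref{media} is immediate and the lemma follows.
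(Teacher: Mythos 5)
Your first step---recognizing that $\mathcal D$ is a Euclidean disk (the preimage of $\overline{D(0,\rho)}$ under a M\"obius map preserving the real axis) and computing its real diameter, hence its center $\omega=c\frac{1-\rho^2(1-h)^2}{1-\rho^2c^2}$ and radius $R=\rho\frac{(1-h)^2-c^2}{1-\rho^2c^2}$---is exactly how the paper begins. But from there the paper simply applies Theorem~\ref{exclu}: since $0<\omega<a$, it suffices to check the single algebraic inequality $R\ie 1-\sqrt{1+\omega^2-\omega a}$, which after squaring reduces to $\rho^2(1-h)^2((1-h)^2-ac)\se 0$ together with $R\ie 1$ (using $\rho\ie\frac12$). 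You instead set Theorem~\ref{exclu} aside and re-run the perpendicular-bisector argument from scratch, demanding $|z-\gamma|<|z-a|$ for \emph{every} $z$ in the lune $K$---a strictly stronger requirement than what Theorem~\ref{media} needs---and this is precisely where the proposal stops being a proof: the decisive verification (``both are elementary estimates in $a,c,h$'', ``one must track the exact constants \ldots to be certain the strict inequality survives uniformly over $\mathcal D$'') is announced but never carried out. Note that at the tip $z_c$ your requirement $|z_c-\gamma|<1$ for all $\gamma\in\mathcal D$ is exactly the containment $|z_c-\omega|+R\ie 1$, i.e.\ $R\ie 1-\sqrt{1+\omega^2-\omega a}$: the very computation the paper performs and you omit. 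So the detour buys nothing, and the crux of the lemma is missing.

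Two smaller points. The appeal to Lemma~\ref{sym} to bound $\rho$ is spurious: $\rho<\frac14$ follows directly from $(1-h)^2-c^2>2c(a-c)$ (a consequence of $1-h>a>c$), and Lemma~\ref{sym} plays no role in this lemma. Also, the claim that the maximum of the real-affine function $z\mapsto|z-\gamma|^2-|z-a|^2$ over $K$ is attained at the tips $z_c,\overline{z_c}$ requires an argument when $\Im\gamma\ne0$: the unconstrained maximizer on the circle is $\frac{a-\gamma}{|a-\gamma|}$, and one must check it falls outside the arc $\{\Re z\ie a/2\}$; this is true because $\gamma$ stays near the real point $c<a$, but it is one more unproved step. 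The most economical repair is to keep your computation of $\omega$ and $R$, verify $R\ie 1-\sqrt{1+\omega^2-\omega a}$ directly, and quote Theorem~\ref{exclu}.
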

\begin{definition}
Let $\alpha$, $r$ and $K$ be defined by $r=\frac{c(a-c)}{2(1-c^2)}$, 
$\alpha = \log\left(\frac{a}{16}\right) / \log\left(\frac{c+r}{1+cr}\right)$, and
\[
K = \min\left\{ (1+c-ac)^p\sqrt{1+c^2-ac}^{1-p} \;;\; (1+c)^q \sqrt{1+c^2-ac}^{1-q}
\right\}
\]
\end{definition}
\begin{remark}
Observe that if $c$ is sufficiently close to $a$ then $K>1$.
\end{remark}

\begin{theorem}[]\label{thm:majo}
If $n\se N_1$, we have :
\[|P(c)| \se \frac{(1-c)(a-c)}{1-ac}\, r^\alpha \, K^{n-1}
\]
\end{theorem}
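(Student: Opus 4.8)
The goal is a lower bound for $|P(c)|$, where $c\in{}]0,a[$. The natural decomposition is $|P(c)| = |c-a|\cdot\prod_{k=1}^{n-1}|c-z_k|$. I would separate the product over the zeros $z_k$ into two competing effects: those zeros that are forced to sit far from $c$ (giving a factor growing like $K^{n-1}$), and a Blaschke-type correction coming from the fact that some zeros could in principle be close to $c$, which Lemma \ref{exclu2} and Lemma \ref{min_reste} control.

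First I would apply Lemma \ref{mini} to trade the product over the $z_k$ for the product over the critical points $\zeta_k$ at a suitable real point $b>1$: this is not literally what we need (we need the value at $c<a<1$), so instead the cleaner route is to use the reflection Lemma \ref{sym}, which converts $|c-z_k|$, for $|z_k|\se 1-h$, into $\frac{c}{1-h}\bigl|\frac{(1-h)^2}{c}-z_k\bigr|$ — i.e. into the modulus of a point evaluated outside the unit disk, at $b=(1-h)^2/c>1$. Theorem \ref{thm_P4} (valid since $n\se N_1$) tells us $|P(0)|\se a^2/16$, which pins the product $\prod|z_k|$ from below and hence, via $|z_k|\ie 1$, forces all the $z_k$ into a thin annulus $1-h\ie|z_k|\ie 1$ for an appropriate small $h$; this is what legitimizes applying Lemma \ref{sym} to every factor.

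Next, having rewritten $\prod_{k}|c-z_k|$ as $\bigl(\tfrac{c}{1-h}\bigr)^{n-1}\prod_k|b-z_k|$ with $b=(1-h)^2/c$, I would bound $\prod_k|b-z_k|$ from below by $\min(B_1,B_2)^{n-1}$ using Lemma \ref{min_deri}, whose hypotheses ($|z_k|\ie1$, $|z_k-a|\se1$) hold for the zeros of $P$ by the standing assumptions; taking $h\to 0$ in the constants recovers exactly the quantity $K=\min\{(1+c-ac)^p\sqrt{1+c^2-ac}^{\,1-p},(1+c)^q\sqrt{1+c^2-ac}^{\,1-q}\}$ from the definition. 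That produces a clean factor $K^{n-1}$. The remaining subtlety is that Lemma \ref{min_deri} alone may not see the Blaschke loss: one must check that no $z_k$ hides inside the excluded disk $\mathcal D$ of Lemma \ref{exclu2}, and then invoke Lemma \ref{min_reste} with $r=\frac{c(a-c)}{2(1-c^2)}$ and $\beta=\log(\prod|z_k|)/\log\frac{c+r}{1+cr}$, which together with $\prod|z_k|\se a^2/16 \cdot \frac{1}{|c-a|}\cdot(\text{harmless factors})$ — more precisely using $|P(0)|\se a^2/16$ — yields $\prod_k\bigl|\frac{c-z_k}{1-cz_k}\bigr|\se r^{\alpha}$ with $\alpha=\log(a/16)/\log\frac{c+r}{1+cr}$. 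Multiplying the normalized product by $\prod_k|1-cz_k|$, which is comparable to the reflected product above, reassembles $\prod_k|c-z_k|$ and contributes the $r^{\alpha}$ factor.

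Finally I would combine the pieces: $|P(c)| = |c-a|\prod_k|c-z_k| \se \frac{(1-c)(a-c)}{1-ac}\,r^{\alpha}\,K^{n-1}$, where the prefactor $\frac{(1-c)(a-c)}{1-ac}$ collects the boundary term $|c-a|=a-c$ together with the residual constant from reconciling the two ways of writing the product (the $\frac{c}{1-h}$ and $\prod|1-cz_k|$ bookkeeping, in the limit). The main obstacle is precisely this bookkeeping step — making the reflection trick of Lemma \ref{sym}, the two-sided bound of Lemma \ref{min_deri}, and the Blaschke-product estimate of Lemma \ref{min_reste} fit together without double-counting or losing the exact constant, and in particular verifying that the disk $\mathcal D$ of Lemma \ref{exclu2} is large enough that the $r^{\alpha}$ gain is genuine while small enough that the hypotheses of the other lemmas remain intact. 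Everything else is a direct substitution of the constants $r,\alpha,K$ from the preceding definition.
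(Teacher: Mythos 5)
Your outline assembles the right lemmas, but two of the load-bearing steps are wrong. First, Theorem \ref{thm_P4} gives $\prod_{k}|z_k| = |P(0)|/a \se a/16$, and this does \emph{not} force every $z_k$ into a thin annulus $1-h\ie |z_k|\ie 1$: it only prevents zeros from being very close to the origin (each $|z_k|\se a/16$), and a zero at, say, $|z_k|=1/2$ is perfectly compatible with the product bound. So you cannot apply Lemma \ref{sym} to every factor. The paper instead fixes an arbitrary $h$ with $0<h<1-a$, splits the zeros into those with $|z_k|\se 1-h$ (where Lemma \ref{sym} applies) and the remaining ``inner'' zeros, and it is precisely the correction factors $\prod\left|\frac{(c-z_k)(1-h)}{(b_h-z_k)c}\right|$ attached to the inner zeros that produce the $r^\alpha$ term, via Lemma \ref{exclu2} and Lemma \ref{min_reste} combined with $\prod|z_k|\se a/16$. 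In your version, where all zeros are (incorrectly) assumed near the circle, the $r^\alpha$ factor has no source, and your account of where it comes from (multiplying back by $\prod|1-cz_k|$) does not correspond to a valid computation.

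Second, you apply Lemma \ref{min_deri} directly to the zeros $z_k$ of $P$, asserting that the hypotheses $|z_k|\ie 1$ and $|z_k-a|\se 1$ ``hold for the zeros of $P$ by the standing assumptions.'' They do not: the standing assumptions give $|a-\zeta_k|\se 1$ only for the \emph{critical points}, because $P$ contradicts Sendov; the zeros themselves may lie anywhere in the unit disk, including well inside the disk $|z-a|<1$. The proof of Lemma \ref{min_deri} genuinely uses the constraint $|b_k-a|\se 1$ (its extremal configurations sit on the boundary of the region $\{|z|\ie 1,\ |z-a|\se 1\}$), so its conclusion with the constants $B_1,B_2$ fails for arbitrary points of the unit disk. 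This is exactly why the paper, after reflecting to the point $b_h=(1-h)^2/c>1$, inserts Lemma \ref{mini} to pass from $|b_h-a|\prod_k|b_h-z_k|$ to $(b_h-1)\prod_k|b_h-\zeta_k|$, and only then applies Lemma \ref{min_deri} to the $\zeta_k$. You considered Lemma \ref{mini} and discarded it; it is indispensable. The final limit $h\to 0$ is as you describe.
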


\begin{proof}[Proof]
Fix $0 < h <1-a$ and assume that the zeros of $P$ are indexed such that for all 
$k\se n_0$ we have $|z_k|\se 1-h$.
Let $b_h = \frac{(1-h)^2}{c}$, we have : 
\begin{align*}
\left|P(c)\right| = & |c-a| \prod_{k=1}^{n_0-1} |c- z_k| \; 
\prod_{k=n_0}^{n-1} |c- z_k| \\
\se & (a-c) \prod_{k=1}^{n_0-1} |c- z_k| \; \left(\frac{c}{1-h}\right)^{n-n_0}
\; \prod_{k=n_0}^{n-1} |b_h - z_k| \qquad\qquad\text{(using lemma \ref{sym})}\\
= & \left(\frac{a-c}{b_h-a}\right) \prod_{k=1}^{n_0-1} \left|\frac{c- z_k}{b_h - z_k}\right| \; 
\left(\frac{c}{1-h}\right)^{n-n_0} \; |b_h-a| \prod_{k=1}^{n-1} |b_h - z_k| 
\end{align*}
lemma \ref{mini} gives :
\begin{align*}
|P(c)| \se & \left(\frac{a-c}{b_h-a}\right) \prod_{k=1}^{n_0-1}\left|\frac{c- z_k}
{b_h - z_k}\right| \; 
\left(\frac{c}{1-h}\right)^{n-n_0} (b_h-1) \prod_{k=1}^{n-1} |b_h - \zeta_k| \\
\se & \left(\frac{(b_h-1)(a-c)}{(b_h-a)}\right) 
\prod_{k=1}^{n_0-1} \left|\frac{(c- z_k)(1-h)}{(b_h - z_k)c}\right| \; 
\left(\frac{c}{1-h}\right)^{n-1} \prod_{k=1}^{n-1} |b_h - \zeta_k| 
\end{align*}
Using lemma \ref{min_deri}, we obtain :
\begin{equation}\label{eq:2}
|P(c)| \se \left(\frac{(b_h-1)(a-c)}{(b_h-a)}\right) 
\prod_{k=1}^{n_0-1} \left|\frac{(c- z_k)(1-h)}{(b_h - z_k)c}\right| \; (K_h)^{n-1}
\end{equation}
where
\[
K_h = \frac{c}{1-h}\min\left\{ (1+b_h-a)^p \sqrt{1+b_h^2-ab_h}^{1-p} \;;\; 
(1+b_h)^q \sqrt{1+b_h^2-ab_h}^{1-q} \right\}
\]
Let $c'=\frac{c}{1-h}$, $z_k'=\frac{z_k}{1-h}$ and 
$r_h =\frac{c(a-c)(1-h)}{2((1-h)^2-c^2)}$, by lemma \ref{exclu2} we know that :
\[ 
\left|\frac{(c- z_k)(1-h)}{(b_h - z_k)c}\right|
= \left|\frac{c'- z_k'}{1 - c' z_k'}\right|  \se r_h
\]
Lemma \ref{min_reste} gives that :
\begin{equation}\label{eq:3}
\prod_{k=1}^{n_0-1} \left|\frac{c'- z_k'}{1 - c' z_k'}\right| \se r_h^{\beta_h}
\end{equation}
where $\beta_h=\log\left(\prod_{k=1}^{n_0-1} |z_k'|\right) /
\log\left(\frac{c'+r_h}{1+c' r_h}\right)$, by theorem \ref{thm_P4} we have that :
\[ 
\prod_{k=1}^{n_0-1} |z_k'| \se \prod_{k=1}^{n_0-1} |z_k| \se \prod_{k=1}^{n-1} |z_k| 
\se \frac{a}{16}
\]
therefore in \eqref{eq:3} the constant $\beta_h$ can be replace by  
$\alpha_h=\log\left(\frac{a}{16}\right) /
\log\left(\frac{c'+r_h}{1+c' r_h}\right)
$. Combining \eqref{eq:2} with \eqref{eq:3} we deduce that :
\[
|P(c)| \se \left(\frac{(b_h-1)(a-c)}{(b_h-a)}\right) r_h^{\alpha_h} (K_h)^{n-1}
\]
the theorem follows letting $h\to 0$.
\end{proof}

\section{Main Result and Conclusion}

We can now formulate our main result. Assume that $K>1$ and let  
\[ N = \max\left\{ N_1 ; N_2 ;
\frac{\log\left(\frac{(1+a)(1-a c)}{(1-c)(a-c)}\right) - \alpha
\log(r)}{\log(K)} + 1 \right\}
\]
\begin{theorem}[]\label{conclu}
Sendov conjecture holds for all polynomial $P$ satisfying
$\deg(P) \se  N$.
\end{theorem}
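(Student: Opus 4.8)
The plan is a short reductio that simply combines the upper and lower estimates for $|P(c)|$ obtained above. Recall that from Section~3 onward $P$ denotes a monic polynomial assumed to contradict Sendov's conjecture, so it is enough to rule out $\deg P\se N$.

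\textbf{Step 1: fixing $c$.} First I would choose, once and for all, a real number $c$ with $0<c<a$ close enough to $a$ that $K>1$; this is legitimate by the Remark following the definition of $\alpha$, $r$, $K$. With $c$ fixed, the quantities $D$, $N_1$, $N_2$, $r$, $\alpha$, $K$ are all determined (they depend only on $a$, on $c$, and on the real centroid $m$, which lies in a bounded range by Corollary~\ref{cor:m} together with $|z_k|\ie1$), so $N$ is a well-defined finite number and $\log K>0$.

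\textbf{Step 2: the two bounds.} Suppose $n:=\deg P\se N$. Since $n\se N\se\max\{N_1,N_2\}$, Theorem~\ref{thm:mino} gives
\[ |P(c)|\ie 1+a . \]
Since also $n\se N\se N_1$ and $K>1$, Theorem~\ref{thm:majo} gives
\[ |P(c)|\se \frac{(1-c)(a-c)}{1-ac}\,r^{\alpha}\,K^{n-1}. \]

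\textbf{Step 3: the contradiction.} Chaining these two inequalities yields $\frac{(1-c)(a-c)}{1-ac}\,r^{\alpha}\,K^{n-1}\ie 1+a$, hence $K^{n-1}\ie \dfrac{(1+a)(1-ac)}{(1-c)(a-c)}\,r^{-\alpha}$. Taking logarithms and dividing by $\log K>0$,
\[ n-1\ie \frac{\log\!\big(\tfrac{(1+a)(1-ac)}{(1-c)(a-c)}\big)-\alpha\log r}{\log K}, \]
so $n\ie N$; moreover the upper bound of Step~2 is in fact strict (already $|P(0)-P(c)|\ie c/a<1$ in the proof of Theorem~\ref{thm:mino}, and $|P(0)|\ie a$), so the displayed inequality is strict too and $n<N$, contradicting $n\se N$. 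Hence no polynomial contradicting Sendov's conjecture has degree $\se N$, which proves the theorem.

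\textbf{Where the difficulty lies.} All the substance is already carried by Theorems~\ref{thm:mino} and~\ref{thm:majo}: the first says a hypothetical counterexample keeps $|P(c)|$ bounded by $1+a$, while the second forces $|P(c)|$ to grow like $K^{n-1}$ with $K>1$, and these are incompatible for large $n$. The only genuine choice left here is that of $c$, which must be taken close to $a$ to guarantee $K>1$; since $r\to0$ and the exponent $\alpha$ degrades as $c\to a$, the resulting $N$ depends on the chosen $c$ and no bound uniform in $c$ is obtained, but the statement only asserts the existence of some $N$, so committing to one admissible $c$ suffices. The one routine verification is that this $c$ is compatible with the standing hypotheses of the lemmas feeding Theorem~\ref{thm:majo}, in particular $0<h<c<a<1-h$ for all small $h>0$, which holds since $0<c<a<1$.
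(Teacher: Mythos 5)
Your proposal is correct and follows essentially the same route as the paper: combine the upper bound $|P(c)|\ie 1+a$ from Theorem~\ref{thm:mino} with the lower bound $|P(c)|\se \frac{(1-c)(a-c)}{1-ac}\,r^{\alpha}\,K^{n-1}$ from Theorem~\ref{thm:majo}, take logarithms, and contradict $n\se N$. Your added remarks on the strictness of the upper bound (closing the $n=N$ edge case) and on the choice of $c$ with $K>1$ are minor refinements of details the paper leaves implicit.
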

\begin{proof}[Proof]
Combining the results of the theorems
\ref{thm:mino} and \ref{thm:majo} we obtain :
\begin{align*}
& 1+a \se \frac{(1-c)(a-c)}{1-ac}\, r^\alpha \, K^{n-1}\\
\iff & \log\left(\frac{(1+a)(1-a c)}{(1-c)(a-c)}\right) \se \alpha
\log(r) + (n-1) \log K \\
\iff & n \ie \left[\log\left(\frac{(1+a)(1-a c)}{(1-c)(a-c)}\right) - \alpha
\log(r)\right]/ \log(K) + 1
\end{align*}
this gives the theorem.
\end{proof}
To compute $N$
we need the value of $m$ which is unknown but can be replace by the upper estimate
given by corollary \ref{cor:m} which depends only on $n$. Computations can be done 
in the following way :
\begin{enumerate}
	\item  choose arbitrarily $0<c<a$ and $m>0$ ;
	
	\item  compute $K$ and check that $K>1$ if not go back to first step modifying 
	$c$ ; 
	
	\item compute $N$ and deduce the upper estimate of $m$ given by 
	corollary \ref{cor:m}, if it is bigger than $m$ go back to first step increasing $m$ 
	else decreasing $m$ repeat until equality holds ;
	
	\item  adjust the choice of $c$ to obtain the smallest value of $N$.
\end{enumerate}
Finally, find below the values of $N$ obtained for many choices of $a$. 
\[
\begin{array}{|c|c|c|c|c|c|c|c|c|}
\hline
a    & c & m & r & \alpha & p & q & K & N  \\ \hline
0,9	& 0,756 & 0,080 &	0,1270 & 13,32 & 0,673 &	0,255  & 1,031 & 	1006 \\ \hline
0,8	 & 0,700 & 0,100 & 0,0686 & 9,66	 & 0,500 & 0,214 & 1,049 & 616 \\ \hline
0,7 & 0,630 & 	0,110 & 0,0366 & 7,3	& 0,369 & 	0,178 & 1,051 & 560 \\ \hline
0,6	& 0,550	 & 0,100 & 0,0197 & 5,73 & 0,286 & 0,154 & 	1,048 & 563 \\ \hline
0,5 & 0,460 &	0,100 & 0,0117 & 4,58 & 0,200 &	0,120 & 1,035 & 718 \\ \hline
0,4 & 0,374 & 	0,089 & 0,0057 & 3,8 & 0,139	& 0,093 &	1,024 & 1004 \\ \hline
0,3 & 0,284 & 	0,073 & 0,0025 & 3,18 & 0,091 & 	0,067 & 1,014 & 1654 \\ \hline
0,2 & 0,191 & 	0,053 & 0,0009 & 2,65 & 0,052 &	0,043 & 1,007 & 3587 \\ \hline
0,1 & 0,096 &	0,029 & 0,0002 & 2,17 & 0,022 & 0,020 & 1,002 & 15064 \\ \hline
\end{array}
\]
\vskip 5mm
{\bf Conclusion.} 
It may be surprising to see that Sendov conjecture is easily proved in extremal
cases, e.g. when $a=0$ or $a=1$ and that in the generic cases i.e. $0<a<1$ only very
partial results are known about it. In the present paper I want to fill this
lack but it remains work to obtain a definitive proof of the conjecture i.e. 
to prove $N=8$ for all $a\in]0,1[$.

\section{Proofs of lemmas}

\begin{proof}[Proof of lemma \ref{lem1}]
Let $k\in\{1,...,n\}$.
Fix $\Re(a_k)$ then $|\delta - a_k|$ is maximum when $|a_k|=1$. Therefore
we can assume that $|a_k|=1$ and write $a_k=e^{i\theta_k}$.
The mapping $\Phi(x)= \frac 1 2 \log(1+\delta^2-2\delta x)$ is concave, by
Jensen's inequality we get :
\begin{align*}
	\log\left(\left|\prod_{k=1}^n (\delta - a_k)\right|^{1/n}\right) & =  \frac{1}{n} 
	\sum_{k=1}^n \log|\delta - a_k| \\
	& =  \frac 1 n
	\sum_{k=1}^n \Phi(\cos\theta_k) \\
	& \ie  \Phi\left(\frac 1 n  \sum_{k=1}^n \cos\theta_k\right)\\
	& =  \log\left(\sqrt{1+\delta^2 - 2 \delta s}\right) 
\end{align*}
which establishes the lemma.
\end{proof}

\begin{proof}[Proof of lemma \ref{maj_deri}]
	Let $k\in\{1,\ldots,n-1\}$, for $\Re(b_k)$ given the modulus
	$\left|\frac{\delta - b_k}{a - b_k}\right|$ is maximal if 
	$|b_k| = 1$, therefore we can assume that for all $k$, $|b_k|=1$.
	The mapping $\Phi$ defined on $[-1, a/2]$ by
	\[ 
	\Phi(x)= \frac 1  2 \log\left(\frac{1+\delta^2 - 2 \delta x}
	{1 + a^2 - 2 a x} \right)
	\] 
	is convex, since :
	\[
	\Phi''(x)  = \frac{2 (a-\delta) (1-a\delta) ((a+\delta)(1+a\delta) - 2 a \delta x)}
	 {(1+\delta^2 - 2 \delta x)^2 (1+a^2 - 2 a x)^2} \se 0
	 \]
	 We deduce that :
	 \begin{align*}
	 \log\left(\prod_{k=1}^{n-1} \left| \frac{\delta - b_k}{a - b_k}\right|
	 \right)  = & \sum_{k=1}^{n-1} \Phi(\mathrm{Re}(b_k)) \\
	  \ie & (n-1) \left[q \Phi(-1) + (1-q) \Phi\left(\frac a 2\right)\right]\\
	  \ie & (n-1) \left[ q \log\left(\frac{1+\delta}{1+a}
	 \right) + (1-q) \log\sqrt{1 + \delta^2 - \delta a} \right]
	 \end{align*}
	 where $q$ satisfies : $-q + (1-q) \frac a 2 = s$, therefore $q=\frac{a/2-s}
	 {1+a/2}$. 
	 Applying the exponential function to both sides we obtain the lemma.
\end{proof}

\begin{proof}[Proof of lemma \ref{min_deri}]
Let $k\in\{1,\ldots,n-1\}$, for $\Re(b_k)$ given the modulus
$\left|b - b_k\right|$ is minimal if $|a-b_k| = 1$ or if $b_k\in [-1,-1+a]$. 
Assume that $|a-b_k| = 1$ or 
$b_k\in[-1,a-1]$ for every $k$. The mapping :
\[ \begin{array}{ccccl}
	\Phi & : & [-1, a/2] & \lra & \mathbb R \\
	& &  x & \longmapsto & \left\{ \begin{array}{ccc}
	\frac 1 2\log\left(1+(b-a)(a+b- 2x)\right)
	& \text{if} & x \se a-1 \\
	\log(b-x) & \text{if} & x \in[-1,a-1]
	\end{array}\right.
\end{array}\] 
satisfies $\Phi''(x)\ie 0$ then $\Phi$ is concave on $[-1,-1+a]$ and 
$[-1+a,a/2]$. We deduce that :
\begin{align*}
\log\left(\prod_{k=1}^{n-1}|b - b_k|\right)  & = \sum_{k=1}^{n-1} \Phi(\Re(b_k))\\
& \se (n-1) \min\{ \alpha \Phi(-1) + \beta\Phi(-1+a) +\gamma \Phi(a/2) \}
\end{align*}
where the minimum is taken over the set $(\alpha,\beta,\gamma)\in
\mathbb R_+^3$ such that :
\[\left\{\begin{array}{ccc}
\alpha+\beta+\gamma & = & 1 \\
-\alpha + (a-1)\beta + \frac a 2\gamma & = & s
\end{array}\right.\]
Let us define the mappings $g$, $f_1$, $f_2$ : $\mathbb R_+^3 \longrightarrow \mathbb R$ by	
\begin{align*}
	 g(\alpha,\beta,\gamma) & = \alpha \log(b+1) + \beta \log(b+1-a) +
	\frac \gamma 2 \log(1+b^2-ab)
	\\
	f_1(\alpha,\beta,\gamma) & =  \alpha+\beta+\gamma
	\\
	f_2(\alpha,\beta,\gamma) & =  -\alpha + (-1+a)\beta + \frac a 2 \gamma
\end{align*}
We want to compute :
\[
\min_{(\alpha,\beta,\gamma)\in\mathbb R_+^3}\left\{g(\alpha,\beta,\gamma) \;;\; 
f_1(\alpha,\beta,\gamma)=1 \text{ and } f_2(\alpha,\beta,\gamma)=m \right\} 
\]
The Lagrange multipliers theory asserts that if the minimum is reached at 
$(\alpha_0,\beta_0,\gamma_0)\in(\mathbb R_+^*)^3$ then there exist multipliers
$\lambda_1$ and $\lambda_2$ such that :
\[ \nabla g = \lambda_1 \nabla f_1 + \lambda_2 \nabla f_2 \iff
\left(\begin{array}{c} \log(b+1) \\ \log(b+1-a) \\ \frac 1 2 \log(1+b^2-ab)
\end{array}\right) = \lambda_1 \left(\begin{array}{c} 1 \\ 1 \\ 1 \end{array}\right)
+ \lambda_2 \left(\begin{array}{c} -1 \\ -1 + a \\ a/2 \end{array}\right)
\]
Which is impossible in generic case, we deduce (if necessary slightly modifying 
$a$ or $b$) that $\alpha=0$ or $\beta=0$. Let us consider both cases.
\begin{itemize}
	\item If $\alpha=0$, we have :
	\[ 
	\prod_{k=1}^{n-1}|b - b_k| \se \left((b+1-a)^\beta (\sqrt{1+b^2-ab})^{1-\beta}
	\right)^{n-1}
	\]
	where $ \beta = \frac{a/2 - s}{1 - a/2}$
	then $\prod_{k=1}^{n-1} \left| b - b_k\right| \se B_1^{n-1} $.
	 
	 \item If $\beta=0$, we have :
	 \[ \prod_{k=1}^{n-1}|b - b_k| \se \left((b+1)^\alpha (\sqrt{1+b^2-ab})^{1-\alpha}
	 \right)^{n-1}\]
	 where  $\alpha = \frac{a/2 - s}{1 + a/2}$
	then $\prod_{k=1}^{n-1} \left| b - b_k\right| \se B_2^{n-1}$.
\end{itemize}
This completes the proof of lemma \ref{min_deri}.
\end{proof}

\begin{proof}[Proof of lemma \ref{min_reste}]
Let $k\in\{1,\ldots,n\}$, if $\left|\frac{c - z_k}{1 - c z_k}\right|$ is given, $|z_k|$
is maximal when $z_k$ is a real number with $\frac{c+r}{1+c r} \ie z_k \ie 1$. 
For all $k\in\{1,\ldots,n\}$, assume that $z_k\in [\frac{c+r}{1+c r},1]$,
write $\alpha_k =\log(z_k)$ and consider the mapping $\Phi$ defined on 
$\left[\log(\frac{c+r}{1+c r}) , 0 \right]$ by
\[ 
\Phi(\alpha) = \log\left(\frac{e^\alpha - c}{1 - c e^\alpha}\right) 
\]
It's easily seen that $\Phi$ is concave. 
Therefore :
\begin{align*}
	 \log\left(\prod_{k=1}^n \left|\frac{c-z_k}{1-c z_k}\right|\right) & =  
	 \sum_{k=1}^N \Phi(\alpha_k) \\
	 & \se \beta \Phi\left(\log(\frac{c+r}{1+c r})\right) + (N-\beta) \Phi(0)
\end{align*}
where $\beta \log(\frac{c+r}{1+c r}) = \log\left(\prod_{k=1}^n|z_k|\right)$,
lemma \ref{min_reste} is obtained taking exponential.
\end{proof}

\begin{proof}[Proof of lemma \ref{sym}]
The following inequalities are equivalent :
\begin{align*}
& |c-z| \se \frac{c}{1-h} \left|\frac{(1-h)^2}{c} - z\right|  \\
\iff & \left|\frac z{1-h} - \frac c{1-h}\right|^2 \se \left| 1 - \frac c{1-h}
\frac z{1-h} \right|^2 \\
\iff & \left|\frac z{1-h}\right|^2 + \left(\frac c{1-h}\right)^2 \se 1 + 
\left|\frac{c z}{(1-h)^2}\right|^2 \\
\iff &  \left(\left|\frac z{1-h}\right|^2 - 1\right) 
\left(1 - \left(\frac c{1-h}\right)^2 \right) \se 0
\end{align*}
we deduce the lemma.
\end{proof}

\begin{proof}[Proof of lemma \ref{mini}]
Let us compute the quotient :
\begin{align*}
\left|\frac{P'(b)}{P(b)}\right| = & \left|\frac1{b-a} + \sum_{k=1}^{n-1}
\frac1{b-z_k} \right| \\
\ie & \frac1{b-a} + \sum_{k=1}^{n-1} \frac1{|b-z_k|} 
\ie \frac n{b-1} 
\end{align*}
then $|P(b)| \se \frac{b-1}{n} |P'(b)| = (b-1)\prod_{k=1}^{n-1}|b-\zeta_k|$ 
and the lemma follows.
\end{proof}

\begin{proof}[Proof of lemma \ref{exclu2}]
Define $k=\frac{c(a-c)}{2((1-h)^2 - c^2)}$,
the disk $\mathcal D$ has center $\omega$ and radius $R$ given by :
\[ 
\omega = c\frac{1 - k^2(1-h)^2}{1-(kc)^2} \qquad \text{and}\qquad
R = k\frac{(1-h)^2-c^2}{1-(kc)^2} 
\]
according to theorem \ref{exclu} it suffices to show that  :
\[ 
R \ie 1 - \sqrt{1+\omega^2 - \omega a} 
\]
or equivalently
\[
\omega^2 - R^2 \ie \omega a - 2 R \qquad\text{and}\qquad R \ie 1
\]
The first inequality holds since :
\[
\omega^2 - R^2 \ie \omega a - 2 R \qquad
\iff \qquad k^2 (1-h)^2 ((1-h)^2-ac)  \se 0
\]
On the other hand :
\[
R \ie 1 \qquad
\iff \qquad \frac{c(a-c)}{2} \ie  1 - k^2 c^2
\]
therefore the second inequality is straightforward since $k\ie\frac12$ and this proves
the lemma.
\end{proof}

\bibliographystyle{amsalpha}

\end{document}